\newmdenv[
  innerleftmargin=1em,
  innerrightmargin=1em,
  leftmargin=4em,
  skipabove=1em,
  skipbelow=1em
]{property}
\newcommand{\tuple}[1]{\ensuremath{\langle{#1}\rangle}}
\newcommand{\set}[1]{\{#1\}}  
\newcommand{\e}{{\rm e}}
\newcommand{\zr}{{\rm f}}
\newcommand{\mt}{\land}
\newcommand{\jn}{\lor}
\newcommand{\ra}{\to}
\newcommand{\pd}{\cdot}
\newcommand{\iv}[1]{{#1}^{-1}}
\newcommand{\rd}{\slash}
\newcommand{\ld}{\backslash}
\newcommand{\eq}{\approx}
\newcommand{\m}[1]{{\bf {#1}}}
\newcommand{\prp}[1]{{\sf #1}}
\newcommand{\f}{\ensuremath{\varphi}}
\newcommand{\ps}{\ensuremath{\psi}}
\newcommand{\x}{\ensuremath{\chi}}
\renewcommand{\a}{\ensuremath{\alpha}}
\renewcommand{\b}{\ensuremath{\beta}}
\newcommand{\ga}{\ensuremath{\gamma}}
\newcommand{\si}{\ensuremath{\sigma}}
\newcommand{\de}{\ensuremath{\delta}}
\DeclareMathOperator{\Con}{\mathrm{Con}}
\newcommand{\cg}[1]{{\rm Cg}^{_{#1}}}
\newcommand{\der}[1]{\mathrel{\vdash_{\mkern-8mu\scriptscriptstyle\rule[-.5ex]{3pt}{0pt}#1}}}
\newcommand{\mdl}[1]{\mathrel{\vdash_{\mkern-8mu\scriptscriptstyle\rule[-.5ex]{3pt}{0pt}#1}}}
\newcommand{\lgc}[1]{\ensuremath{{\rm #1}}}
\newcommand{\De}{\Delta}
\newcommand{\Ga}{\Gamma}
\newcommand{\Si}{\Sigma}
\newcommand{\The}{\Theta}
\newcommand{\F}{\m{F}}
\newcommand{\Fm}{\m{Fm}}
\newcommand{\Fmc}{\mathrm{Fm}}
\newcommand{\Eqc}{\mathrm{Eq}}
\newcommand{\Fc}{\mathrm{F}}
\newcommand{\type}{\mathcal{L}}
\newcommand{\eps}{\ensuremath{\varepsilon}}
\newcommand{\N}{\mathbb{N}}
\newcommand{\V}{\mathcal{V}}
\newcommand{\Vfsi}{\mathcal{V}_{_\textup{FSI}}}
\newcommand{\Vsi}{\mathcal{V}_{_\text{SI}}}
\newcommand{\K}{\mathcal{K}}
\newcommand{\BLat}{\mathcal{BL}\mathit{at}}
\newcommand{\BDLat}{\mathcal{BDL}\mathit{at}}
\newcommand{\cmap}[1]{\ensuremath{\pi}_{#1}}
\newcommand{\mathrmL}{{\mathchoice{\mbox{\rm\L}}{\mbox{\rm\L}}{\mbox{\rm\scriptsize\L}}{\mbox{\rm\tiny\L}}}}
\newcommand{\xbar}{\overline{x}}
\newcommand{\ybar}{\overline{y}}
\newcommand{\zbar}{\overline{z}}
\title{Interpolation and Amalgamation}
\author{George Metcalfe}{Mathematical Institute, University of Bern}{george.metcalfe@unibe.ch}{}{}
\authorrunning{George Metcalfe}
\titlerunning{Interpolation and Amalgamation}
\begin{document}

\maketitle

\begin{abstract}
This chapter presents a state-of-the-art survey of relationships, traditionally referred to as `bridges', between interpolation properties for propositional logics --- including superintuitionistic, modal, and substructural logics --- and amalgamation properties for corresponding classes of algebraic structures. These bridges are developed in the framework of universal algebra and illustrated with a broad range of examples from logic and algebra, demonstrating their use in establishing properties for both fields.
\end{abstract}

\tableofcontents


\section{Introduction}\label{s:introduction}

The aim of this chapter is to establish and explore some remarkable relationships or `bridges' existing between various forms of interpolation for propositional logics and amalgamation for classes of algebraic structures. Such bridges have appeared regularly in the literature, both for specific families of logics --- notably, superintuitionistic, modal, and substructural logics (see,~e.g.,~\cite{Mak91,GM05,GJKO07,KO10}) --- and within the broader settings of abstract algebraic logic~\cite{Mad98,CP99} and model theory~\cite{Bac75}. Here we follow~\cite{Pig72,MMT14} in constructing these bridges in the framework of universal algebra, which supplies the tools required for a uniform presentation, while still covering the vast majority of propositional logics. To keep our presentation self-contained, we introduce the requisite elementary notions from universal algebra along the way, giving proofs of key theorems and providing pointers to the relevant literature where appropriate. Throughout, we incorporate a range of examples from logic and algebra that illustrate the usefulness of the `bridges' in both directions, and give references to more recent developments.

To get our bearings, let us consider first the historically pivotal case of intuitionistic propositional logic $\lgc{IPC}$, understood as a consequence relation $\der{\lgc{IPC}}$, defined syntactically by an axiom system or sequent calculus, or semantically via Heyting algebras or Kripke models. Maksimova proved in 1977 that precisely eight superintuitionistic logics (i.e., axiomatic extensions of $\lgc{IPC}$) have the Craig interpolation property (\prp{CIP}) or equivalently, in this setting, the deductive interpolation property  (\prp{DIP})~\cite{Mak77}. That is, for each such logic $\lgc{L}$ and formulas $\a$ and $\b$ satisfying $\a\der{\lgc{L}}\b$, there exists a formula $\ga$ satisfying $\a\der{\lgc{L}}\ga$ and $\ga\der{\lgc{L}}\b$,  whose variables occur in both $\a$ and $\b$. Maksimova's proof made use of Kripke semantics for superintuitionistic logics and was essentially algebraic. First, she proved that a superintuitionistic logic has the \prp{DIP} if, and only if, an associated variety (equational class) of Heyting algebras has the amalgamation property (\prp{AP}), and, second,  that there are precisely eight such varieties (see~\refchapter{chapter:nonclassical} for further details).  This result was later strengthened by Ghilardi and Zawadowski~\cite{GZ02}, who, building on Pitts' theorem for $\lgc{IPC}$~\cite{Pit92}, proved that these eight logics have the stronger property of uniform interpolation, and that the first-order theories of the eight varieties of Heyting algebras with the \prp{AP} have a model completion (see~\refchapter{chapter:uniform}).

Similar results have been established for families of normal modal logics~(see, e.g.,~\cite{CZ96,GM05}). In particular, Maksimova used appropriate amalgamation properties to prove that between forty-three and forty-nine axiomatic extensions of $\lgc{S4}$ have the \prp{DIP}, and between thirty-one and thirty-seven such extensions have the \prp{CIP}~\cite{Mak79,Mak91}, whereas continuum-many axiomatic extensions of G{\"o}del-L{\"o}b logic $\lgc{GL}$ have both properties~\cite{Mak91}.  Interpolation and amalgamation properties (and their failures) have also been established for diverse families of substructural logics and varieties of residuated algebras, respectively (see,~e.g.,~\cite{Mon06,GJKO07,KO10,Mar12,MMT14,GLT15,MPT23,JS25}). For example, precisely nine varieties of Sugihara monoids have the \prp{AP}, and hence precisely nine axiomatic extensions of the relevant logic $\lgc{RM_t}$ (R-Mingle with unit) have the \prp{DIP} or equivalently, in this case, the \prp{CIP}~\cite{MM12}. Moreover, a variety of MV-algebras has the \prp{AP} if, and only if, it is generated by a totally ordered MV-algebra, so countably infinitely many axiomatic extensions of \L ukasiewicz logic $\lgc{\mathrmL}$ have the \prp{DIP}~\cite{DL00}, despite the fact that classical propositional logic $\lgc{CPC}$ is the only consistent axiomatic extension of $\lgc{\mathrmL}$ that has the \prp{CIP}.  Conversely, syntactic proofs of the \prp{CIP} for substructural logics --- in particular, extensions of the Full Lambek Calculus with exchange --- have been used to establish the \prp{AP} for varieties of residuated algebras where no algebraic proof is known (see,~e.g.,~\cite{GJKO07,MPT23} for details).

Amalgamation emerged as a central concept in Schreier's work on amalgamated free products of groups in the 1920s~\cite{Sch27}. The \prp{AP} was formulated in full generality by Fra\"{\i}ss{\'e} in his 1954 paper~\cite{Fra54} and subsequently studied intensively  by J\'onsson in the setting of universal algebra~\cite{Jon56,Jon60,Jon61,Jon62,Jon65}. The relationship between amalgamation and interpolation was first considered by Daigneault in the context of polyadic algebras~\cite{Dai64}, and then, in a more general algebraic setting, by J\'onsson~\cite{Jon65}, but is credited in the latter to unpublished work of Keisler, and, as observed by Pigozzi in~\cite{Pig72}, essential ideas underlying the proof may be traced back to Magnus' work in group theory. The basis for this relationship for a variety of algebras (or corresponding logic) was identified explicitly in~\cite{Pig72}: equational consequence in the variety can be interpreted in terms of congruences of its free algebras. This interpretation provides the basis for a wealth of other bridge theorems between `algebraic' and `logical' properties studied in the literature (see, e.g.,~\cite{KMPT83,CP99,MMT14,FM24} for further examples and references). In particular, such a bridge theorem has been established between the Beth definability property for a propositional logic --- closely related to the \prp{CIP} in some contexts (see~\refchapter{chapter:propositional}) --- and surjectivity of epimorphisms in the corresponding variety~\cite{Nem84,BH06}.

Let us provide a brief overview of the contents of this chapter. In Section~\ref{s:algebra_and_logic}, we recall some basic notions of universal algebra and define equational consequence for a class of algebras, explaining in Section~\ref{s:consequence_and_congruence} how this definition can be recast for a variety in terms of congruences of its free algebras. In Section~\ref{s:cep}, we establish a bridge between the congruence extension property for a variety and the existence of an equational `local deduction theorem' known as the extension property, describing also how a distinguished subclass may suffice for checking these properties. Similarly, in Section~\ref{s:amalgamation}, we establish a bridge between the \prp{AP} for a variety (or, in some cases, a distinguished subclass) and a property of equational consequence known as the Robinson property. In Section~\ref{s:interpolation}, we use these bridge theorems to relate algebraic properties to (variants of) the \prp{DIP}. Finally, in Section~\ref{s:craig}, we turn our attention to the \prp{CIP} and corresponding superamalgamation property for varieties of algebras with a lattice reduct.


\section{Logic and Algebra}\label{s:algebra_and_logic}

In this section, we recall some basic tools of universal algebra (referring the reader to~\cite{BS81} for further details) and introduce the key notion of equational consequence for a class of algebras. We also explain, pointing to some familiar examples, how equational consequence provides an appropriate setting for relating propositional logics to a suitable `algebraic semantics'.

Let us begin by fixing an algebraic language $\type$ --- that is, a first-order language with no relation symbols --- and let $\type_n$ denote the set of operation (function) symbols of arity $n\in\mathbb{N}$. An {\em $\type$-algebra} $\m{A}$ consists of a non-empty set $A$ equipped with an operation $f^\m{A}\colon A^n\ra A$ for each $f\in\type_n$, typically written as $\tuple{A,f^\m{A}_1,\dots,f^\m{A}_k}$ when $\type$ has operation symbols $f_1,\dots,f_k$. An $\type$-algebra $\m{B}$ is a {\em subalgebra} of an $\type$-algebra $\m{A}$ if $B\subseteq A$ and $f^\m{B}(b_1,\dots,b_n)=f^\m{A}(b_1,\dots,b_n)$ for all  $f\in\type_n$ and $b_1,\dots,b_n\in B$.  The {\em direct product} $\m{A}=\prod_{i\in I}\m{A}_i$ of a family $\set{\m{A}_i}_{i\in I}$  of $\type$-algebras  consists of the Cartesian product $\prod_{i\in I} A_i$ equipped with an operation $f^{\m{A}}$ for each $f\in\type_n$ satisfying $f^{\m{A}}(a_1,\dots,a_n)(i)=f^{\m{A}_i}(a_1(i),\dots,a_n(i))$ for all  $i\in I$. 

Below we recall some classes of algebras that play a prominent role in the study of propositional logics, omitting sub- and superscripts when these are clear from the context.

\begin{example}
A {\em lattice} is defined order-theoretically as a partially ordered set $\tuple{L,\le}$ such that any  two elements $a,b\in L$ have a greatest lower bound $a\mt b$ and least upper bound $a \jn b$. It is said to be {\em bounded} if it has a greatest element $\top$ and least element $\bot$, and {\em complete} if every subset $A\subseteq L$ has a greatest lower bound $\bigwedge A$ and least upper bound $\bigvee A$. A lattice may also be defined  algebraically, however, in a language with binary operation symbols $\mt$ and $\jn$ as an algebra $\m{L}=\tuple{L,\mt,\jn}$ such that defining $a\le b :\Longleftrightarrow a\mt b=a$ yields a  lattice $\tuple{L,\le}$ in the order-theoretic sense. If $\tuple{L,\le}$ is bounded, then constant symbols $\bot,\top$ can be added to the language to obtain an algebra $\tuple{L,\mt,\jn,\bot,\top}$. A (bounded) lattice $\m{L}$ is called {\em distributive} if $a\mt (b\jn c)=(a\mt b)\jn(a\mt c)$ and  $a\jn (b\mt c)=(a\jn b)\mt(a\jn c)$ for all $a,b,c\in L$. \lipicsEnd
\end{example}

\begin{example}\label{e:Boolean}
A {\em Boolean algebra}, defined in the language of bounded lattices extended with a unary operation symbol $\lnot$, is an algebra $\m{B}=\tuple{B,\mt,\jn,\lnot,\bot,\top}$ such that $\tuple{B,\mt, \jn,\bot,\top}$ is a distributive bounded lattice and $\lnot a$ is the (necessarily unique) complement of $a \in B$, i.e., $a\mt\lnot a=\bot$ and $a\jn\lnot a=\top$. A {\em modal algebra} is defined in this language extended with a unary operation symbol $\Box$ as an algebra $\tuple{M,\mt,\jn,\lnot,\bot,\top,\Box}$ such that  $\tuple{M,\mt,\jn,\lnot,\bot,\top}$ is a Boolean algebra, $\Box\top=\top$, and $\Box(a\mt b)=\Box a\mt \Box b$ for all $a,b\in M$. \lipicsEnd
\end{example}

\begin{example}\label{e:Heyting}
A {\em Heyting algebra}, defined in the language of bounded lattices extended with a binary operation symbol $\ra$, is an algebra $\m{H}=\tuple{H,\mt,\jn,\ra,\bot,\top}$ such that $\tuple{H,\mt,\jn,\bot,\top}$ is a bounded distributive lattice and $\ra$ is the residual of $\mt$, i.e., $a\mt b\le c \iff a\le b\ra c$ for all $a,b,c\in H$. Boolean algebras are term-equivalent to Heyting algebras satisfying $\lnot\lnot a=a$ for all $a\in H$, where $\lnot a\coloneqq a\ra\bot$; that is, $\tuple{H,\mt,\jn,\lnot,\bot,\top}$ is a Boolean algebra and, conversely, for any Boolean algebra $\m{B}$, defining $a\ra b\coloneqq \lnot a\jn b$ yields a Heyting algebra $\tuple{B,\mt,\jn,\ra,\bot,\top}$ satisfying $\lnot\lnot a=a$ for all $a\in B$. Heyting algebras and Boolean algebras provide algebraic semantics for intuitionistic propositional logic $\lgc{IPC}$ and classical propositional logic $\lgc{CPC}$, respectively, and other classes of Heyting algebras play this role for superintuitionistic logics. In particular,  Heyting algebras satisfying $(a\ra b)\jn(b\ra a)=\top$ for all $a,b\in H$  provide algebraic semantics for G{\"o}del logic $\lgc{G}$ and are called {\em G{\"o}del algebras} (see,~e.g.,~\cite{CZ96}). \lipicsEnd
\end{example}

\begin{example}\label{e:FL-algebra}
An {\em FL-algebra} (or {\em pointed residuated lattice}), defined in a language with binary operation symbols $\mt$, $\jn$, $\pd$, $\ld$, $\rd$, and constant symbols $\zr$, $\e$, is an algebra $\m{L}=\tuple{L,\mt,\jn,\pd,\ld,\rd,\zr,\e}$ such that $\tuple{L,\mt,\jn}$ is a lattice, $\tuple{L,\cdot,\e}$ is a monoid, and $\ld$ and $\rd$ are residuals of $\pd$, i.e., $b \le a \ld c\iff a b \le c \iff a \le c \rd b$ for all $a,b,c\in L$. If $ab=ba$ for all $a,b\in L$, then $\m{L}$ is called an {\em FL$_\text{e}$-algebra} and we define $a\ra b\coloneqq a\ld b=b\rd a$. FL-algebras and FL$_\text{e}$-algebras provide algebraic semantics for the full Lambek calculus $\lgc{FL}$ and full Lambek calculus with exchange $\lgc{FL_e}$, respectively, and their subclasses play this role for a wide range of substructural logics  (see,~e.g.,~\cite{GJKO07,MPT23}). For example, Heyting algebras are term-equivalent to FL-algebras satisfying $a\mt b =ab$ and $\zr \le a$ for all $a,b\in L$, and {\em MV-algebras} --- algebraic semantics  for  \L ukasiewicz logic $\lgc{\mathrmL}$ --- are term-equivalent to FL$_\text{e}$-algebras satisfying $a\jn b = (a \ra b)\ra b$ and $\zr \le a$ for all $a,b\in L$. Let us note also that FL-algebras  satisfying $\e=\zr$ and $a(a\ld\e)=\e$ for all $a\in L$ are term-equivalent to {\em $\ell$-groups}, algebras with their own extensive literature (see,~e.g.,~\cite{AF88}) that play a key role in the structure theory of residuated algebras (see~\cite{MPT23}). \lipicsEnd
\end{example}

An important ingredient of the interplay between logic and algebra is the notion of a formula algebra over a set of variables. Let us denote arbitrary sets of variables by $\xbar,\ybar,\zbar$, assuming without further comment that these are disjoint, and denote unions by writing $\xbar,\ybar$.  We also assume for a more streamlined presentation that $\type$ has at least one constant symbol.\footnote{All definitions and theorems presented here are easily adjusted to accommodate languages with no constant symbols by adding assumptions that certain sets or their intersections  are non-empty.}  The  {\em $\type$-formula algebra $\Fm_\type(\xbar)$ over $\xbar$} (often referred to as a {\em term algebra}) consists of  the set $\Fmc_\type(\xbar)$ of {\em $\type$-formulas over~$\xbar$}, built inductively using $\xbar$ and the operation symbols of $\type$, with an operation $f^{\Fm_\type(\xbar)}$ for each $f\in\type_n$ that maps  $\a_1,\dots,\a_n\in\Fmc_\type(\xbar)$ to $f(\a_1,\dots,\a_n)$. Note that since, by assumption, $\type$ has at least one constant symbol, $\Fm_\type(\emptyset)$ always exists.

Although consequence in a propositional logic is typically defined over formulas, it is convenient when relating logic to algebra to also consider consequences between equations. Formally, an $\type$-{\em equation over $\xbar$} is an ordered pair of $\type$-formulas $\a,\b\in\Fmc_\type(\xbar)$, denoted by $\a\eq\b$, and the set of $\type$-equations over  $\xbar$ is identified with the set $\Eqc_\type(\xbar)\coloneqq(\Fmc_\type(\xbar))^2$. When the language is clear from the context, we drop the subscript $\type$.

A map $\f\colon A\ra B$ between  $\type$-algebras $\m{A}$ and $\m{B}$ is a {\em homomorphism}, denoted by writing $\f\colon\m{A}\ra\m{B}$, if for each $f\in\type_n$ and all $a_1,\ldots,a_n\in A$,
\[
\f (f^\m{A}(a_1,\ldots,a_n))=f^\m{B}(\f(a_1),\ldots,\f(a_n)),
\]
and its {\em kernel} is defined as $\ker(\f)\coloneqq\set{\tuple{a_1,a_2}\in A^2\mid\f(a_1)=\f(a_2)}$. 

An injective homomorphism $\f\colon\m{A}\to\m{B}$ is called an {\em embedding} of $\m{A}$ into $\m{B}$, and if $\f$ is bijective, it is called an  {\em isomorphism} between $\m{A}$ and $\m{B}$. If there exists a surjective homomorphism from $\m{A}$ to $\m{B}$, then $\m{B}$ is said to be a {\em homomorphic image} of $\m{A}$, and if there exists an isomorphism between $\m{A}$ and $\m{B}$, then $\m{A}$ is said to be {\em isomorphic} to $\m{B}$, denoted by writing~$\m{A}\cong\m{B}$. For convenience, we also extend a homomorphism $\f\colon\m{A}\ra\m{B}$ to the homomorphism $\f\colon\m{A}\times\m{A}\ra\m{B}\times\m{B};\:\tuple{a_1,a_2}\mapsto\tuple{\f(a_1),\f(a_2)}$.

We now have the  ingredients required to define equational consequence with respect to some given class $\K$ of $\type$-algebras. For any set $\xbar$ and $\Si\cup\set{\eps}\subseteq\Eqc(\xbar)$, let
\begin{align*}
\Si\mdl{\K}\eps\: :\Longleftrightarrow&\:\text{ for any $\m{A}\in\K$ and homomorphism $\f\colon{\Fm(\xbar)}\ra\m{A}$,}\\[.025in]
&\qquad \Si\subseteq\ker(\f) \enspace\Longrightarrow\enspace \eps\in\ker(\f),
\end{align*}
and for any $\Si\cup\De\subseteq\Eqc(\xbar)$,

\begin{itemize}
\item[] $\Si\mdl{\K}\De :\Longleftrightarrow\: \Si\mdl{\K}\eps$ for each $\eps\in\De$.\footnote{To confirm that $\Si\mdl{\K}\eps$ is well-defined, we should check that the defining condition is independent of the set $\xbar$ for which $\Si\cup\set{\eps}\subseteq\Eqc(\xbar)$. To this end, it suffices to observe that every homomorphism $\f\colon  {\Fm(\xbar)}\ra\m{A}$ extends to a homomorphism $\hat{\f}\colon{\Fm(\xbar,\ybar)}\ra\m{A}$ with $\ker(\hat{\f})\cap\Fmc(\xbar)^2=\ker(\f)$, and every homomorphism $\ps\colon  {\Fm(\xbar,\ybar)}\ra\m{A}$ restricts to a homomorphism $\ps'\colon  {\Fm(\xbar)}\ra\m{A}$ with $\ker(\ps)\cap\Fmc(\xbar)^2=\ker(\ps')$.}
\end{itemize}

\noindent
It is easily checked that this notion of consequence, when restricted to a fixed set $\xbar$, yields an (abstract) consequence relation over $\Eqc(\xbar)$; that is, for any $\Si\cup\Pi\cup\set{\eps}\subseteq\Eqc(\xbar)$,

\begin{itemize}
\item if $\eps\in\Si$, then $\Si\mdl{\K}\eps$\: {\em (reflexivity)};
\item if $\Si\mdl{\K}\eps$ and $\Si\subseteq\Pi$, then $\Pi\mdl{\K}\eps$\: {\em (monotonicity)};
\item if $\Si\mdl{\K}\eps$  and $\Pi\mdl{\K}\Si$, then $\Pi\mdl{\K}\eps$\: {\em (transitivity)}.
\end{itemize}

\noindent
These consequence relations are also {\em substitution-invariant}; that is, for any $\Si\cup\set{\eps}\subseteq\Eqc(\xbar)$ and 
homomorphism (substitution) $\si\colon\Fm(\xbar)\ra\Fm(\xbar)$,

\begin{itemize}
\item  if $\Si\mdl{\K}\eps$, then $\si[\Si]\mdl{\K}\si(\eps)$\:  {\em (substitution-invariance)}.
\end{itemize}

Certain classes of $\type$-algebras enjoy further useful properties and play an important role in universal algebra and (algebraic) logic. In particular, a class of $\type$-algebras is called a {\em variety} if it is closed under taking homomorphic images, subalgebras, and direct products. Equivalently, by a famous theorem of Birkhoff, a class of $\type$-algebras is a variety if, and only if, it is an {\em equational class}, i.e., a class of $\type$-algebras that satisfy some given set of $\type$-equations. Notably, equational consequence in a variety $\V$ is {\em finitary}; that is, for any $\Si\cup\set{\eps}\subseteq\Eqc(\xbar)$,

\begin{itemize}
\item  if $\Si\mdl{\V}\eps$, then $\Si'\mdl{\V}\eps$ for some finite $\Si'\subseteq\Si$\: {\em (finitarity)}.
\end{itemize}

\noindent
This statement can be deduced  using either the compactness theorem of first-order logic or  Lemma~\ref{l:bridge} below and the fact that the congruences of any algebra form an algebraic lattice.

A variety $\V$ of $\type$-algebras may serve as an algebraic semantics for a propositional logic~$\lgc{L}$, viewed as a substitution-invariant consequence relation $\der{\lgc{L}}$ defined over $\Fm(\xbar)$ for a countably infinite set $\xbar$. That is, there may exist {\em transformers} $\tau$ and $\rho$ that map formulas to finite sets of equations, and equations to finite sets of formulas, respectively, and satisfy for any $T\cup\set{\a}\subseteq\Fmc(\xbar)$ and $\Si\cup\set{\eps}\subseteq\Eqc(\xbar)$,

\begin{itemize}
\item $T\der{\lgc{L}}\a\iff\bigcup\tau[T]\mdl{\V}\tau(\a)$;
\item $\Si\mdl{\V}\eps\iff\bigcup\rho[\Si]\der{\lgc{L}}\rho(\eps)$;
\item $\set{\a}\der{\lgc{L}}\bigcup\rho[\tau(\a)]$\, and\, $\bigcup\rho[\tau(\a)]\der{\lgc{L}}\a$;
\item $\set{\eps}\mdl{\V}\bigcup\tau[\rho(\eps)]$\, and\, $\bigcup\tau[\rho(\eps)]\mdl{\V}\eps$,
\end{itemize}

\noindent
and, for any $\a\in\Fmc(\xbar)$, $\eps\in\Eqc(\xbar)$, and homomorphism (substitution) $\si\colon\Fm(\xbar)\ra\Fm(\xbar)$,

\begin{itemize}
\item $\si[\tau(\a)]=\tau(\si(\a))$\, and\, $\si[\rho(\eps)]=\rho(\si(\eps))$.
\end{itemize}

\noindent
When such transformers exist, they allow us to translate equational consequences into consequences between formulas, and vice versa. In particular, deductive interpolation and other syntactic properties may be interpreted as concerning either equations or formulas, depending on context and convenience. For further details, we refer the reader to the vast literature on abstract algebraic logic (see, e.g.,~\cite{BP89,Fon16}). Let us just note that we restrict our account here to varieties rather than quasivarieties (or more general classes of algebras), partly to avoid additional complexity and partly because varieties already provide algebraic semantics for the most well-studied non-classical logics.

\begin{example}
Heyting algebras form a variety that provides algebraic semantics for $\lgc{IPC}$ via transformers $\tau$, mapping a formula $\a$ to the set of equations $\set{\a\eq\top}$, and $\rho$, mapping an equation $\a\eq\b$ to the set of formulas $\set{\a\ra\b,\b\ra\a}$. By general results of abstract algebraic logic, each axiomatic extension of $\lgc{IPC}$  then has an algebraic semantics (via the same transformers) provided by the variety of Heyting algebras satisfying equations corresponding to the additional axioms. In particular, Boolean algebras and G{\"o}del algebras provide algebraic semantics for $\lgc{CPC}$ and $\lgc{G}$, respectively. These transformers can also be used to show that varieties of modal algebras serve as algebraic semantics for axiomatic extensions of $\lgc{K}$. \lipicsEnd
\end{example}

\begin{example}
FL-algebras form a variety that provides algebraic semantics for the full Lambek calculus $\lgc{FL}$ via transformers $\tau$, mapping a formula $\a$ to the set of equations $\set{\a\mt\e\eq\e}$, and $\rho$, mapping an equation $\a\eq\b$ to the set of formulas $\set{\a\ld\b,\b\ld\a}$  (see, e.g.,~\cite{GJKO07,MPT23}). Algebraic semantics for other well-known substructural logics are provided by various varieties of FL-algebras (see~\cite{GJKO07,MPT23}). In particular, FL$_\text{e}$-algebras and MV-algebras (see Example~\ref{e:FL-algebra}), provide algebraic semantics for $\lgc{FL_e}$ and {\L}ukasiewicz logic $\lgc{\mathrmL}$, respectively, and, although a logic of $\ell$-groups has not been considered,  Abelian $\ell$-groups (i.e., $\ell$-groups satisfying $xy\eq yx$) provide algebraic semantics for Abelian logic $\lgc{A}$ (see, e.g.,~\cite{MPT23}). 
\lipicsEnd
\end{example}

\begin{remark}
It is not hard to see that transformers restricted to single formulas and equations are available for the previous examples; just replace $\set{\a\ra\b,\b\ra\a}$ and $\set{\a\ld\b,\b\ld\a}$ with $\set{(\a\ra\b)\mt(\b\ra\a)}$ and $\set{(\a\ld\b)\mt(\b\ld\a)}$, respectively. However, for many logics --- including the implicational fragment of $\lgc{IPC}$ with an algebraic semantics provided by Hilbert algebras --- more than one formula or equation is necessary (see, e.g.,~\cite{Fon16} for details). \lipicsEnd
\end{remark}


\section{Consequence and Congruence}\label{s:consequence_and_congruence}

In this section, we recall some further elementary notions of universal algebra and present an interpretation of equational consequence in a variety in terms of the congruences of its free algebras (Lemma~\ref{l:bridge}). This result, which provides the scaffolding for proofs of the bridge theorems in subsequent sections, was first established explicitly in~\cite{Pig72}.

Consider any $\type$-algebra $\m{A}$, recalling that $\type$ is a fixed arbitrary algebraic language with at least one constant symbol.  A {\em congruence} of $\m{A}$ is an equivalence relation on $A$ satisfying for each $f\in\type_n$ and all $a_1,b_1,\dots,a_n,b_n\in A$,
\[
\tuple{a_1,b_1},\ldots,\tuple{a_n,b_n}\in\The\enspace\Longrightarrow\enspace
\tuple{f^\m{A}(a_1,\ldots,a_n),f^\m{A}(b_1,\ldots,b_n)}\in\The.
\]
Notably, the kernel of any homomorphism $\f\colon\m{A}\ra\m{B}$ is a congruence of $\m{A}$.

The set $\Con\m{A}$ of congruences of $\m{A}$ is closed under taking arbitrary intersections and hence forms a complete lattice $\tuple{\Con\m{A},\subseteq}$ with least element $\De_A\coloneqq\set{\tuple{a,a}\mid a\in A}$ and greatest element $A\times A$. Clearly, $\bigwedge S = \bigcap S$ for any  $S\subseteq\Con\m{A}$, but  $\bigcup S$ may not be a congruence, so $\bigvee S$ is the congruence of $\m{A}$ generated by $\bigcup S$, i.e., the smallest congruence of~$\m{A}$ containing~$\bigcup S$. More generally, the {\em congruence of $\m{A}$ generated by $R\subseteq A\times A$} is 
\[
\cg{\m{A}}(R)\coloneqq\bigcap\set{\The\in\Con\m{A}\mid R\subseteq\The}.
\]
A congruence $\The\in\Con\m{A}$ is called {\em finitely generated} if $\The=\cg{\m{A}}(R)$ for some finite $R\subseteq A\times A$.\footnote{The map $\cg{\m{A}}$ on $\mathcal{P}(A^2)$ is  an {\em algebraic closure operator} on $A \times A$, corresponding to the fact that $\tuple{\Con\m{A},\subseteq}$ is an {\em algebraic lattice} whose compact elements are the finitely generated congruences of $\m{A}$.}

Just as normal subgroups and ideals are used to construct quotient groups and rings, so congruences are used to construct quotients of arbitrary algebras. Given any $\type$-algebra $\m{A}$ and $\The\in\Con\m{A}$, the $\type$-algebra $\m{A}/\The$ consists of the set $A/\The$ of $\The$-equivalence classes of $A$ with a well-defined (since $\The$ is a congruence) operation $f^{\m{A}/\The}$ for each $f\in\type_n$ satisfying $f^{\m{A}/\The}(a_1/\The,\dots,a_n/\The)= f(a_1,\dots,a_n)/\The$ for all $a_1,\dots,a_n\in A$. Observe also that the {\em canonical map} $\cmap{\The}\colon A\ra A /\The;\; a\mapsto a/\The$ is a surjective homomorphism from $\m{A}$ onto $\m{A} /\The$ with $\ker(\cmap{\The})=\The$, so $\Con\m{A}$ consists of precisely  the kernels of homomorphisms $\f\colon\m{A}\to\m{B}$.

Congruences and quotients are used to formulate generalizations of the usual isomorphism theorems for groups and rings, including the following {\em general homomorphism theorem}.

\begin{theorem}[cf.~{\cite[Theorem~B.2]{MPT23}}]\label{t:general_homomorphism_theorem}
 For any surjective homomorphism $\pi\colon\m{A}\ra\m{B}$ and  homomorphism $\f\colon\m{A}\ra\m{C}$ with $\ker(\pi)\subseteq\ker(\f)$, there exists a unique homomorphism $\ps\colon\m{B}\ra\m{C}$ satisfying $\ps\pi=\f$; moreover, $\ps$ is injective if, and only if, $\ker(\pi)=\ker(\f)$. 
\end{theorem}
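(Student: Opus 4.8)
The plan is to construct $\ps$ by the only rule that can possibly work, namely transport along $\pi$, and then verify the three claims (homomorphism, $\ps\pi=\f$ with uniqueness, and the injectivity criterion) by routine unwinding of definitions. Concretely, I would define $\ps\colon\m{B}\ra\m{C}$ by $\ps(b)\coloneqq\f(a)$ for some $a\in A$ with $\pi(a)=b$; such an $a$ exists since $\pi$ is surjective. The one step that carries all the content --- and the only place the hypothesis $\ker(\pi)\subseteq\ker(\f)$ enters --- is well-definedness: if $\pi(a)=\pi(a')$, then $\tuple{a,a'}\in\ker(\pi)\subseteq\ker(\f)$, so $\f(a)=\f(a')$, and $\ps$ does not depend on the choice of preimage. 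By construction $\ps\pi=\f$, and uniqueness is immediate from surjectivity of $\pi$: any $\ps'$ with $\ps'\pi=\f$ satisfies $\ps'(b)=\ps'(\pi(a))=\f(a)=\ps(b)$ whenever $b=\pi(a)$.

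Next I would check that $\ps$ is a homomorphism. Given $f\in\type_n$ and $b_1,\dots,b_n\in B$, I choose $a_i\in A$ with $\pi(a_i)=b_i$; since $\pi$ is a homomorphism, $f^\m{B}(b_1,\dots,b_n)=\pi(f^\m{A}(a_1,\dots,a_n))$, so by the defining rule for $\ps$ and the fact that $\f$ is a homomorphism,
\[
\ps(f^\m{B}(b_1,\dots,b_n))=\f(f^\m{A}(a_1,\dots,a_n))=f^\m{C}(\f(a_1),\dots,\f(a_n))=f^\m{C}(\ps(b_1),\dots,\ps(b_n)).
\]

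For the ``moreover'' part, the key identity is that $\ps\pi=\f$ gives, for all $a_1,a_2\in A$, the equivalence $\f(a_1)=\f(a_2)\iff\ps(\pi(a_1))=\ps(\pi(a_2))$. If $\ps$ is injective, the right-hand side is equivalent to $\pi(a_1)=\pi(a_2)$, so $\ker(\f)\subseteq\ker(\pi)$, which together with the standing inclusion $\ker(\pi)\subseteq\ker(\f)$ yields equality. Conversely, if $\ker(\pi)=\ker(\f)$ and $\ps(b_1)=\ps(b_2)$, I pick preimages $a_i$ of $b_i$ under $\pi$; then $\f(a_1)=\ps(b_1)=\ps(b_2)=\f(a_2)$, so $\tuple{a_1,a_2}\in\ker(\f)=\ker(\pi)$ and hence $b_1=\pi(a_1)=\pi(a_2)=b_2$. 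I do not anticipate a genuine obstacle: the substance is entirely in the well-definedness of $\ps$, and the only thing requiring a little care is tracking the direction of the kernel inclusion in the last step.
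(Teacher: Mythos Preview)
Your argument is correct and is the standard proof of the general homomorphism theorem. The paper does not supply its own proof of this statement; it merely records the theorem with a reference to~\cite[Theorem~B.2]{MPT23}, so there is nothing further to compare.
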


Now let $\V$ be any variety of $\type$-algebras and let $\xbar$ be any set. We obtain a congruence $\Phi_\V(\xbar)$ of the formula algebra $\Fm(\xbar)$ by defining for $\a,\b\in\Fmc(\xbar)$,
\[
\tuple{\a,\b}\in\Phi_\V(\xbar)\, :\Longleftrightarrow\enspace\mdl{\V}\a\eq\b.
\]
Equivalently, $\Phi_\V(\xbar)=\bigcap I_\V(\xbar)$, where $I_\V(\xbar)$ is the set $\set{\The\in\Con\Fm(\xbar)\mid\Fm(\xbar)/\The\in\V}$. The {\em free algebra of $\V$ over $\xbar$} may then be defined as the quotient algebra
\[
\F_\V(\xbar)\,\coloneqq\,\Fm(\xbar)  /\Phi_\V(\xbar).
\]
Let us drop the subscript $\V$ when the variety is clear from the context, and write $\a$ to denote both a formula $\a$ in $\Fm(\xbar)$ and its image $\a/\Phi(\xbar)$ under $\cmap{\Phi(\xbar)}$  in $\F(\xbar)$. Then $\F(\xbar)$ is generated by $\xbar$ and enjoys the universal mapping property for $\V$: every map from $\xbar$ to $\m{A}\in\V$ extends to a unique homomorphism from $\F(\xbar)$ to $\m{A}$. Observe also that the homomorphism $\f$ from $\Fm(\xbar)$ to the direct product $\prod\set{\Fm(\xbar)/\The\mid\The\in I(\xbar)}$ satisfying $\f(\a)(\The)=\cmap{\The}(\a)$ for each $\a\in\Fmc(\xbar)$ and all $\The\in I(\xbar)$, has kernel $\Phi(\xbar)$. From this observation and Theorem~\ref{t:general_homomorphism_theorem}, it follows that $\F(\xbar)$ embeds into a direct product of members of $\V$, and hence, since $\V$ is a variety, $\F(\xbar)\in\V$. Let us also assume for convenience, and without loss of generality, that $\F(\xbar)$ is a subalgebra of $\F(\xbar,\ybar)$ for any disjoint sets of variables $\xbar$, $\ybar$.

We now have all the ingredients necessary to state and prove the key lemma relating equational consequence in a variety to congruences of its free algebras.

\begin{lemma}\label{l:bridge}
For any variety $\V$ and $\Si\cup\set{\eps}\subseteq\Eqc(\xbar)$,
\[
\Si\mdl{\V}\eps\iff\eps\in\cg{\F(\xbar)}(\Si).
\]
\end{lemma}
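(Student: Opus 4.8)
The plan is to prove the two implications separately, using the general homomorphism theorem (Theorem~\ref{t:general_homomorphism_theorem}) and the universal mapping property of $\F(\xbar)$ to pass between homomorphisms out of $\F(\xbar)$ and congruences containing $\Si$.

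First I would establish the forward direction. Assume $\Si\mdl{\V}\eps$ and write $\The\coloneqq\cg{\F(\xbar)}(\Si)$; I want to show $\eps\in\The$. Consider the canonical map $\cmap{\The}\colon\F(\xbar)\ra\F(\xbar)/\The$. Since $\V$ is a variety, it is closed under homomorphic images, so $\F(\xbar)/\The\in\V$. Composing with the natural surjection $\Fm(\xbar)\ra\F(\xbar)$ (i.e.\ $\cmap{\Phi(\xbar)}$) yields a homomorphism $\f\colon\Fm(\xbar)\ra\F(\xbar)/\The$ into a member of $\V$. By construction $\Si\subseteq\The$, so every equation in $\Si$ is collapsed by $\f$, i.e.\ $\Si\subseteq\ker(\f)$. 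The definition of $\mdl{\V}$ then forces $\eps\in\ker(\f)$, which says exactly that the two sides of $\eps$ are identified in $\F(\xbar)/\The$, i.e.\ $\eps\in\The=\cg{\F(\xbar)}(\Si)$.

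For the converse, assume $\eps\in\cg{\F(\xbar)}(\Si)$ and take any $\m{A}\in\V$ together with a homomorphism $g\colon\Fm(\xbar)\ra\m{A}$ with $\Si\subseteq\ker(g)$; I must show $\eps\in\ker(g)$. By the universal mapping property, $g$ factors through $\F(\xbar)$: there is a homomorphism $\bar{g}\colon\F(\xbar)\ra\m{A}$ with $\bar g\circ\cmap{\Phi(\xbar)}=g$. The key point is that $\Si\subseteq\ker(g)$ translates to $\Si\subseteq\ker(\bar g)$ (viewing $\Si$ now as a subset of $\F(\xbar)^2$ via the images of its formulas), because $\ker(\bar g)$ is the image of $\ker(g)$ under the quotient map and $\Phi(\xbar)\subseteq\ker(g)$. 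Since $\ker(\bar g)$ is a congruence of $\F(\xbar)$ containing $\Si$, it contains the generated congruence: $\cg{\F(\xbar)}(\Si)\subseteq\ker(\bar g)$. Hence $\eps\in\ker(\bar g)$, and therefore $\eps\in\ker(g)$.

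The main obstacle, such as it is, is bookkeeping rather than mathematics: one must be careful and explicit about the notational convention (already flagged in the excerpt) that a formula $\a$ and its image $\a/\Phi(\xbar)$ in $\F(\xbar)$ are written the same way, so that the phrase ``$\Si\subseteq\ker(\f)$'' makes sense both as a statement about $\Fm(\xbar)$-homomorphisms and about $\F(\xbar)$-homomorphisms, and the translation between the two uses $\Phi(\xbar)\subseteq\ker(\f)$. Once that identification is handled cleanly, both directions are short applications of closure of $\V$ under homomorphic images, the universal mapping property, and the definition of the generated congruence as the least congruence containing a given relation.
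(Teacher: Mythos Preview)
Your proof is correct and follows essentially the same route as the paper's: the forward direction uses the quotient $\F(\xbar)/\The$ (a member of $\V$ by closure under homomorphic images) composed with the canonical surjection $\Fm(\xbar)\ra\F(\xbar)$, and the converse factors an arbitrary $g\colon\Fm(\xbar)\ra\m{A}$ through $\F(\xbar)$ via $\Phi(\xbar)\subseteq\ker(g)$. The only cosmetic difference is that the paper invokes Theorem~\ref{t:general_homomorphism_theorem} explicitly for the factorization where you invoke the universal mapping property, but these amount to the same argument.
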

\begin{proof}
Let $\pi$ denote the canonical map from $\Fm(\xbar)$ onto $\F(\xbar)$ and define $\The\coloneqq\cg{\F(\xbar)}(\pi[\Si])$, recalling that $\eps\in\cg{\F(\xbar)}(\Si)$ is notational shorthand for $\pi(\eps)\in\The$. 

Suppose first that $\Si\mdl{\V}\eps$ and consider the homomorphism $\f\colon\F(\xbar)\ra\F(\xbar)/\The;\:\a\mapsto\a/\The$. Since  $\F(\xbar)/\The\in\V$ and $\Si\subseteq\ker(\f\pi)$, by assumption, $\eps\in\ker(\f\pi)$. Hence $\pi(\eps)\in\ker(\f)=\The$. 

For the converse, suppose that $\pi(\eps)\in\The$ and consider any $\m{A}\in\V$ and homomorphism $\f\colon\Fm(\xbar)\ra\m{A}$ such that $\Si\subseteq\ker(\f)$. Since $\ker(\pi)=\Phi_\V(\xbar)\subseteq\ker(\f)$, there exists, by Theorem~\ref{t:general_homomorphism_theorem}, a homomorphism $\ps\colon\F(\xbar)\ra\m{A}$ satisfying $\ps\pi=\f$. It follows that $\pi[\Si]\subseteq\ker(\ps)$ and, by assumption, $\pi(\eps)\in\The\subseteq\ker(\ps)$. So $\eps\in\ker(\ps\pi)=\ker(\f)$. Hence $\Si\mdl{\V}\eps$.
 \end{proof}


\section{Local Deduction Theorems and the Congruence Extension Property}\label{s:cep}

In this section, we illustrate the usefulness of Lemma~\ref{l:bridge} by relating the well-known congruence extension property to a general `local deduction theorem' for equational consequence known as the extension property. This property was studied in~\cite{Ono86} as the `limited \prp{GINT}' and Theorem~\ref{t:cepiffep} below may be viewed as a refinement of Theorem~8 from this paper; it also appears in an abstract algebraic logic setting as the `extension interpolation property'  in~\cite{CP99}, and as one of the model-theoretic properties considered in~\cite{Bac75}.

Many propositional logics admit a `local deduction theorem', which allows a formula occurring as a premise in a consequence to be combined with the conclusion and vice versa. The following property provides a general formulation of such relationships in the setting of equational consequence for varieties.

\begin{property}
A variety $\V$ has the {\em extension property} (\prp{EP}) if for any $\Si\subseteq\Eqc(\xbar,\ybar)$ and $\Pi\cup\set{\eps}\subseteq\Eqc(\ybar)$ satisfying  $\Si\cup\Pi\mdl{\V}\eps$, there exists a $\De\subseteq\Eqc(\ybar)$ satisfying $\Si\mdl{\V}\De$ and $\De\cup\Pi\mdl{\V}\eps$.
\end{property}

\begin{example}\label{e:HeytingEP}
The fact that every variety of Heyting algebras has the $\prp{EP}$ is a direct consequence of the deduction theorem for superintuitionistic logics; that is, for any such logic $\lgc{L}$ and set of formulas $T\cup\set{\a,\b}$,
\[
T\cup\set{\a}\der{\lgc{L}}\b\iff T\der{\lgc{L}}\a\ra\b.
\]
Similarly, every variety of modal algebras has the $\prp{EP}$ by virtue of the local deduction theorem for normal modal logics; that is, for any such logic $\lgc{L}$ and set of formulas  $T\cup\set{\a,\b}$,
\[
T\cup\set{\a}\der{\lgc{L}}\b\iff T\der{\lgc{L}}(\a\mt\Box\a\mt \cdots\mt\Box^n\a) \ra\b\:\text{ for some }n\in\mathbb{N}.
\]
If  the variety of modal algebras satisfies $\Box x \le \Box\Box x$ (i.e., $\lgc{L}$ is an axiomatic extension of $\lgc{K4}$), then the right-hand side of the above equivalence can be simplified to $T\der{\lgc{L}}(\a\mt\Box\a)\ra\b$. \lipicsEnd
\end{example}

Let us show now, using the translation schema provided by Lemma~\ref{l:bridge}, that a variety $\V$ has the \prp{EP} if, and only if, for any $\The\in\Con\F(\xbar,\ybar)$ and $\Psi\in\Con\F(\ybar)$,
\begin{align}
(\cg{\F(\xbar,\ybar)}(\Psi)\jn\The)\cap {\Fc(\ybar)}^2=\Psi\jn(\The\cap {\Fc(\ybar)}^2).\label{id:ep}
\end{align}
Suppose first that $\V$ has the \prp{EP} and consider any $\The\in\Con\F(\xbar,\ybar)$ and $\Psi\in\Con\F(\ybar)$. For the non-trivial inclusion of \eqref{id:ep}, let $\eps\in (\cg{\F(\xbar,\ybar)}(\Psi)\jn\The)\cap {\Fc(\ybar)}^2$. Then $\The\cup\Psi\mdl{\V}\eps$ and, by the \prp{EP}, there  exists a $\De\subseteq\Eqc(\ybar)$ such that $\The\mdl{\V}\De$ and $\De\cup\Psi\mdl{\V}\eps$. Hence $\De\subseteq\The\cap\Fc(\ybar)^2$ and, as required, $\eps\in\Psi\jn(\The\cap\Fc(\ybar)^2)$. For the converse, given $\Si\subseteq\Eqc(\xbar,\ybar)$ and $\Pi\cup\set{\eps}\subseteq\Eqc(\ybar)$ satisfying $\Si\cup\Pi\mdl{\V}\eps$, let $\The\coloneqq\cg{\F(\xbar,\ybar)}(\Si)$ and $\Psi\coloneqq\cg{\F(\ybar)}(\Pi)$. Then applying \eqref{id:ep} and defining $\De\coloneqq\The\cap {\Fc(\ybar)}^2$ yields $\Si\mdl{\V}\De$ and $\De\cup\Pi\mdl{\V}\eps$.

We use this recasting of the \prp{EP} to relate it to the following well-known algebraic property.

\begin{property}
 A class $\K$ of $\type$-algebras  has the {\em congruence extension property} (\prp{CEP}) if for any $\m{B}\in\K$, subalgebra $\m{A}$ of $\m{B}$, and $\The\in\Con\m{A}$, there exists a $\Phi\in\Con\m{B}$ such that $\Phi\cap A^2=\The$. 
 \end{property}

\noindent
It is often convenient to use a slight reformulation of this property, observing that $\K$ has the \prp{CEP} if, and only if, $\cg{\m{B}}(\The)\cap A^2=\The$ for any $\m{B}\in\K$, subalgebra $\m{A}$ of $\m{B}$, and $\The\in\Con\m{A}$.

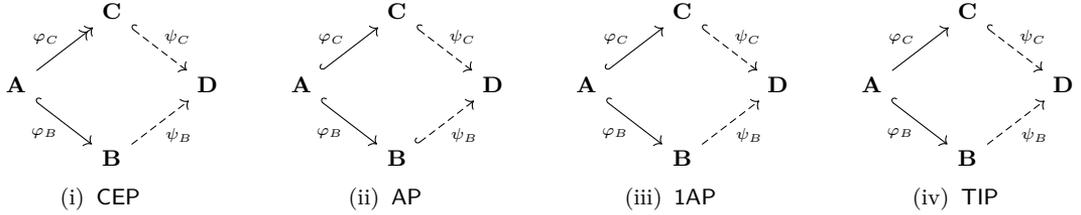
\begin{figure}[t]
\centering
\begin{footnotesize}
$\begin{array}{ccccccc}
\begin{tikzcd}
& \m{C} \ar[dashed, hook, rd, "\ps_C"] &       \\
\m{A} \ar[->>, ru, "\f_C"] \ar[hook, rd,"\f_B"'] &      & \m{D}  \\
 & \m{B} \ar[dashed, ru, "\ps_B"'] &     
\end{tikzcd}
&&
\begin{tikzcd}
& \m{C} \ar[dashed, hook, rd, "\ps_C"] &       \\
\m{A} \ar[hook, ru, "\f_C"] \ar[hook, rd,"\f_B"'] &      & \m{D}  \\
 & \m{B} \ar[dashed, hook, ru, "\ps_B"'] &     
\end{tikzcd}
& &
\begin{tikzcd}
& \m{C} \ar[dashed, hook, rd, "\ps_C"] &       \\
\m{A} \ar[hook, ru, "\f_C"] \ar[hook, rd,"\f_B"'] &      & \m{D}  \\
 & \m{B} \ar[dashed,  ru, "\ps_B"'] &     
\end{tikzcd}
& &
\begin{tikzcd}
& \m{C} \ar[dashed, hook, rd, "\ps_C"] &       \\
\m{A} \ar[ ru, "\f_C"] \ar[hook, rd,"\f_B"'] &      & \m{D}  \\
 & \m{B} \ar[dashed,  ru, "\ps_B"'] &     
\end{tikzcd}\\[.5in]
\text{(i)} \enspace \prp{CEP}  \quad& & \text{(ii)} \enspace\prp{AP} \quad & & \text{(iii)} \enspace\prp{1AP} \quad && \text{(iv)} \enspace\prp{TIP} \quad
\end{array}$
\end{footnotesize}
\caption{Commutative diagrams for algebraic properties}
\label{f:properties}
\end{figure}

Like many of the algebraic properties considered in this chapter, the \prp{CEP} admits an elegant presentation via commutative diagrams. A \emph{span} in a class of $\type$-algebras $\K$ is a 5-tuple $\tuple{\m{A},\m{B},\m{C},\f_B,\f_C}$ consisting of algebras $\m{A},\m{B},\m{C}\in \K$ and homomorphisms $\f_B\colon\m{A}\ra\m{B}$, $\f_C\colon\m{A}\ra\m{C}$. We call this span {\em injective} if $\f_B$ is an embedding, {\em doubly injective} if both $\f_B$ and $\f_C$ are embeddings, and {\em injective-surjective} if $\f_B$ is an embedding and $\f_C$ is surjective. 

We claim that a variety $\V$ has the \prp{CEP}  if, and only if, for any injective-surjective span $\tuple{\m{A},\m{B},\m{C},\f_B,\f_C}$ in $\V$, there exist a $\m{D}\in\V$, a homomorphism $\ps_B\colon\m{B}\ra\m{D}$, and an embedding $\ps_C\colon\m{C}\ra\m{D}$ such that $\ps_B \f_B=\ps_C\f_C$, that is, the diagram in Figure~\ref{f:properties}(i) is commutative. Suppose first that $\V$ has the $\prp{CEP}$. Let $\tuple{\m{A},\m{B},\m{C},\f_B,\f_C}$ be any injective-surjective span in $\V$, assuming without loss of generality that $\f_B$ is the inclusion map. Then  $\The\coloneqq\ker(\f_C)\in\Con\m{A}$ and Theorem~\ref{t:general_homomorphism_theorem} yields an isomorphism $\x\colon\m{A}/\The\ra\m{C}; \: a/\The\mapsto\f_C(a)$. Let $\Phi\coloneqq\cg{\m{B}}(\The)$ and $\m{D}\coloneqq\m{B}/\Phi$, noting that  $\Phi\cap A^2=\The$, by the \prp{CEP}. It follows that $\ps'_C\colon \m{A}/\The\ra\m{D};\: a/\The\mapsto a/\Phi$ is an embedding. Hence, defining $\ps_B\coloneqq\cmap{\Phi}$ and $\ps_C=\ps'_C\x^{-1}$, for any $a\in A$,
\[
\ps_B\f_B(a)=a/\Phi=\ps'_C(a/\The)=\ps'_C\x^{-1}\x(a/\The)=\ps_C\f_C(a).
\]
For the converse, suppose that the property holds, and consider any  subalgebra $\m{A}$ of $\m{B}\in\V$ and congruence $\The\in\Con\m{A}$. We obtain an injective-surjective span $\tuple{\m{A},\m{B},\m{A}/\The,\f_B,\f_C}$, where $\f_B$ is the inclusion map and $\f_C\coloneqq\cmap{\The}$. By assumption, there exist a $\m{D}\in\V$, a homomorphism $\ps_B\colon\m{B}\ra\m{D}$, and an embedding $\ps_C\colon\m{A}/\The\ra\m{D}$ such that $\ps_B \f_B=\ps_C\f_C$. Let $\Phi\coloneqq\ker(\ps_B)\in\Con\m{B}$. Then for any $\tuple{a_1,a_2}\in \Phi\cap A^2$,
\[
\ps_C(a_1/\The)=\ps_C\f_C(a_1)=\ps_B(a_1)=\ps_B(a_2)=\ps_C\f_C(a_2)= \ps_C(a_2/\The),
\]
and hence, by injectivity, $a_1/\The=a_2/\The$, i.e., $\tuple{a_1,a_2}\in\The$. So $\Phi\cap A^2=\The$ as required.

\begin{example}\label{e:groups}
Since the congruence lattice of a group $\m{G}=\tuple{G,\pd,\iv{},\e}$ is isomorphic to the lattice of its normal subgroups, $\m{G}$ has the \prp{CEP} if, and only if, for any subgroup $\m{H}$ of $\m{G}$ and normal subgroup $\m{N}$ of $\m{H}$, there exists a normal subgroup $\m{K}$ of $\m{G}$ satisfying $K\cap H=N$. Clearly, the variety of Abelian groups has the \prp{CEP}, since in this case every subgroup of $\m{G}$ is normal, and any subgroup of a subgroup of $\m{G}$ is also a subgroup of $\m{G}$. However, the variety of groups does not have the \prp{CEP}, since, for example, the alternating group $\m{A}_5$ is simple, but has subgroups that are not simple. \lipicsEnd
\end{example}

\begin{example}\label{e:latticesnocep}
The variety $\BLat$ of bounded lattices does not have the \prp{CEP}. Consider, for example, the bounded lattice $\m{M}_5=\tuple{\set{\bot,a,b,c,\top},\mt,\jn,\bot,\top}$ depicted  by the Hasse diagram:
\begin{center}
\begin{tikzpicture}[scale=.9]
   \tikzstyle{every node}=[draw, circle, fill=black, minimum size=3pt, inner sep=0pt]
  \draw (0,0) node (bot) [label=below:\mbox{$\bot$}]{};
  \draw (0,2) node (top) [label=above:\mbox{$\top$}]{};
  \draw (-1,1) node (a) [label=left:\mbox{$a$}]{};
  \draw (0,1) node (b) [label=right:\mbox{$b$}]{};
  \draw (1,1) node (c) [label=right:\mbox{$c$}]{};
  \draw (bot) -- (b);
  \draw (top) -- (b); 
  \draw (bot) -- (a);
  \draw (top) -- (a);
  \draw (bot) -- (c);
  \draw (top) -- (c);
\end{tikzpicture}
\end{center}
Note that $\Con\m{M}_5=\set{\De_{M_5},(M_5)^2}$, i.e., $\m{M}_5$ is simple. 
Let $\m{A}$ be the sublattice of $\m{M}_5$ with $A=\set{\bot,a,c,\top}$ and let $\The$ be the congruence of $\m{A}$ with congruence classes $\set{a,\bot}$ and $\set{c,\top}$. Then $\De_{M_5}\cap A^2 =\De_{A}\neq\The$ and $(M_5)^2\cap A^2 = A^2\neq\The$, so $\m{M}_5$ does not have the \prp{CEP}. \lipicsEnd
\end{example}

We now establish the promised bridge between the \prp{CEP} and the \prp{EP}, noting that one direction implies that  varieties of algebras such as  Abelian groups enjoy a `local deduction theorem', while the other direction implies that varieties corresponding to propositional logics, such as Heyting algebras and modal algebras, possess a fundamental algebraic property.

\begin{theorem}\label{t:cepiffep}
A variety has the congruence extension property if, and only if, it has the extension property.
\end{theorem}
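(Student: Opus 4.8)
The plan is to route everything through the two equivalent reformulations already established in this section: the variety $\V$ has the \prp{EP} precisely when identity~\eqref{id:ep} holds for all $\The\in\Con\F(\xbar,\ybar)$ and $\Psi\in\Con\F(\ybar)$, and $\V$ has the \prp{CEP} precisely when the commutative diagram of Figure~\ref{f:properties}(i) can be completed for every injective-surjective span in $\V$ --- equivalently, when $\cg{\m{B}}(\The)\cap A^2=\The$ for every subalgebra $\m{A}$ of any $\m{B}\in\V$ and every $\The\in\Con\m{A}$. I would prove the implication from \prp{CEP} to \prp{EP} using the diagram, and the converse using identity~\eqref{id:ep} together with a reduction of arbitrary subalgebra pairs to free algebras.

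\emph{From \prp{CEP} to \prp{EP}.} Given $\Si\subseteq\Eqc(\xbar,\ybar)$ and $\Pi\cup\set{\eps}\subseteq\Eqc(\ybar)$ with $\Si\cup\Pi\mdl{\V}\eps$, I would take $\De\coloneqq\set{\de\in\Eqc(\ybar)\mid\Si\mdl{\V}\de}$, so that $\Si\mdl{\V}\De$ is immediate and only $\De\cup\Pi\mdl{\V}\eps$ remains. Suppose this fails: some $\m{C}\in\V$ and homomorphism $g\colon\Fm(\ybar)\ra\m{C}$ satisfy $\De\cup\Pi\subseteq\ker(g)$ and $\eps\notin\ker(g)$. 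Since $\m{C}\in\V$, the map $g$ factors through $\F(\ybar)$, and, replacing $\m{C}$ by the subalgebra generated by the image of $\ybar$, I may assume $\m{C}=\F(\ybar)/\Psi$ with $\Psi\in\Con\F(\ybar)$ and $g$ equal to $\cmap{\Psi}$ followed by this identification. Put $\The\coloneqq\cg{\F(\xbar,\ybar)}(\Si)\in\Con\F(\xbar,\ybar)$; by Lemma~\ref{l:bridge} the assumption $\De\subseteq\ker(g)$ translates to $\The\cap\Fc(\ybar)^2\subseteq\Psi$. Hence the canonical maps give an injective-surjective span in $\V$ whose apex is $\F(\ybar)/(\The\cap\Fc(\ybar)^2)$, with the embedding into $\F(\xbar,\ybar)/\The$ on one side and the surjection onto $\m{C}$ on the other. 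Completing the diagram of Figure~\ref{f:properties}(i) yields $\m{D}\in\V$, a homomorphism $\ps_B\colon\F(\xbar,\ybar)/\The\ra\m{D}$, and an embedding $\ps_C\colon\m{C}\ra\m{D}$. Setting $h\coloneqq\ps_B\cmap{\The}\colon\F(\xbar,\ybar)\ra\m{D}$, one has $\Si\subseteq\The\subseteq\ker(h)$; chasing the commuting square shows that the restriction of $h$ to $\F(\ybar)$ has kernel exactly $\Psi$, so $\Pi\subseteq\ker(h)$ as well, whereas the injectivity of $\ps_C$ gives $\eps\notin\ker(h)$. Precomposing $h$ with the canonical map $\Fm(\xbar,\ybar)\ra\F(\xbar,\ybar)$ then produces a homomorphism into a member of $\V$ that separates $\eps$ from $\Si\cup\Pi$, contradicting $\Si\cup\Pi\mdl{\V}\eps$.

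\emph{From \prp{EP} to \prp{CEP}.} Let $\m{A}$ be a subalgebra of $\m{B}\in\V$ and $\ga\in\Con\m{A}$; I want $\cg{\m{B}}(\ga)\cap A^2=\ga$, the inclusion $\supseteq$ being trivial. Choose a surjective homomorphism $h\colon\F(\xbar,\ybar)\twoheadrightarrow\m{B}$ that restricts to a surjection $\F(\ybar)\twoheadrightarrow\m{A}$ (take $\ybar$ to index a generating set of $\m{A}$ and $\xbar$ a generating set of $\m{B}$), and set $\The\coloneqq\ker(h)$ and $\Psi\coloneqq\set{\tuple{p,q}\in\Fc(\ybar)^2\mid\tuple{h(p),h(q)}\in\ga}$. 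Then $\Psi\in\Con\F(\ybar)$, $\The\cap\Fc(\ybar)^2\subseteq\Psi$, and $h[\Psi]=\ga$. Applying identity~\eqref{id:ep} gives $(\cg{\F(\xbar,\ybar)}(\Psi)\jn\The)\cap\Fc(\ybar)^2=\Psi$. Writing $\Xi\coloneqq\cg{\F(\xbar,\ybar)}(\Psi)\jn\The$, we have $\ker(h)=\The\subseteq\Xi$, so $h[\Xi]$ is a congruence of $\m{B}$, and in fact $h[\Xi]=\cg{\m{B}}(\ga)$ (using $h[\Psi]=\ga$ and $\ker(h)\subseteq\Xi$). Now if $\tuple{c,d}\in\cg{\m{B}}(\ga)\cap A^2$, pick $p,q\in\Fc(\ybar)$ with $h(p)=c$ and $h(q)=d$; from $\tuple{h(p),h(q)}\in h[\Xi]$ and $\ker(h)\subseteq\Xi$ we get $\tuple{p,q}\in\Xi\cap\Fc(\ybar)^2=\Psi$, hence $\tuple{c,d}=\tuple{h(p),h(q)}\in h[\Psi]=\ga$. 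So $\cg{\m{B}}(\ga)\cap A^2\subseteq\ga$, as required.

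The routine parts are the bookkeeping with the canonical quotient maps and the identification of $\F(\ybar)$ as a subalgebra of $\F(\xbar,\ybar)$, all covered by the standing conventions of this section, together with two standard facts: that a homomorphism from a term algebra into a member of $\V$ factors through the free algebra, and that the image of a generated congruence under a surjection whose kernel it contains is again a generated congruence. The hard part will be the first implication --- choosing the span so that its surjective leg is the quotient $\F(\ybar)/(\The\cap\Fc(\ybar)^2)\twoheadrightarrow\m{C}$, and recognizing that the \emph{injectivity} forced on $\ps_C$ by the injective-surjective shape of the span is exactly what keeps $\eps$ from being collapsed in $\m{D}$, thereby supplying the separating homomorphism.
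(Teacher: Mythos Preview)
Your argument is correct in both directions. The converse (\prp{EP}~$\Rightarrow$~\prp{CEP}) is essentially the paper's proof: reduce an arbitrary subalgebra $\m{A}\le\m{B}$ to the free-algebra situation by choosing generators, then use identity~\eqref{id:ep} together with the correspondence between congruences above $\ker(h)$ and congruences of the image. Your presentation is slightly more hands-on where the paper invokes identity~\eqref{id:corresp}, but the content is the same.

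The forward direction genuinely differs. The paper proves \prp{CEP}~$\Rightarrow$~\eqref{id:ep} by a direct chain of equalities: quotient $\F(\xbar,\ybar)$ by $\The$, view $\f[\F(\ybar)]$ as a subalgebra of $\F(\xbar,\ybar)/\The$, and use~\eqref{id:corresp} plus the \prp{CEP} (in its $\cg{\m{B}}(\The)\cap A^2=\The$ form) to compute $(\cg{\F(\xbar,\ybar)}(\Phi)\jn\The)\cap\Fc(\ybar)^2$ directly. You instead work at the level of equational consequence, take $\De$ to be the full $\ybar$-theory of $\Si$, and argue by contradiction via a countermodel; the \prp{CEP} enters through its span-completion formulation (Figure~\ref{f:properties}(i)), with the injectivity of $\ps_C$ doing the work of keeping $\eps$ separated. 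Your route is more model-theoretic in flavor and makes the role of the diagram transparent; the paper's is a shorter, purely lattice-theoretic calculation that avoids the contradiction setup and never mentions the span formulation. Either yields the result cleanly.
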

\begin{proof}
It suffices to prove that a variety $\V$ has the \prp{CEP} if, and only if, it satisfies~\eqref{id:ep}. For both directions, we will use the fact --- a consequence of the correspondence theorem for universal algebra (see, e.g.,~\cite[Theorem~B.4]{MPT23}) --- that for any $\type$-algebras $\m{B}$ and $\m{C}$, surjective homomorphism $\pi\colon\m{C}\ra\m{B}$, and $R\subseteq C\times C$,
\begin{align}
\pi^{-1}[\cg{\m{B}}(\pi[R])]=\cg{\m{C}}(R)\jn\ker(\pi).\label{id:corresp}
\end{align}
Suppose first that $\V$ has the \prp{CEP} and consider any $\The\in\Con\F(\xbar,\ybar)$ and $\Phi\in\Con\F(\ybar)$.  Let $\pi\colon\F(\xbar,\ybar)\ra\F(\xbar,\ybar)/\The$ be the canonical map with $\ker(\pi)=\The$  and let $\f$ be the restriction of $\pi$ to $\F(\ybar)$ with  $\ker(\f) =\The\cap\Fc(\ybar)^2$. Define $\m{B}\coloneqq\pi(\F(\xbar,\ybar))=\F(\xbar,\ybar)/\The$ and $\m{A}\coloneqq\f(\F(\ybar))$. Using \eqref{id:corresp} in the first and fifth steps and the \prp{CEP} for the third,
\begin{align*}
(\cg{\F(\xbar,\ybar)}(\Phi)\jn\The)\cap {\Fc(\ybar)}^2
& =\iv{\pi}[\cg{\m{B}}(\pi[\Phi])]\cap\Fc(\ybar)^2\\
& \subseteq\iv{\pi}[\cg{\m{B}}(\f[\Phi])\cap A^2]\\
& =\iv{\pi}[\cg{\m{A}}(\f[\Phi])]\\
& =\f^{-1}[\cg{\m{A}}(\f[\Phi])]\\
& = \Phi\jn (\The\cap\Fc(\ybar)^2)\\
& \subseteq (\cg{\F(\xbar,\ybar)}(\Phi)\jn\The)\cap {\Fc(\ybar)}^2.
\end{align*}
For the converse, suppose that $\V$ satisfies~\eqref{id:ep} and consider any congruence $\The$ of a subalgebra $\m{A}$ of some $\m{B}\in\V$. Let $\ybar\coloneqq A$ and $\xbar\coloneqq B{\setminus}A$, noting that $\The\subseteq A^2\subseteq\Fc(\ybar)^2\subseteq\Fc(\xbar,\ybar)^2$. Consider the surjective homomorphisms $\f\colon\F(\ybar)\ra\m{A}$ and $\pi\colon\F(\xbar,\ybar)\ra\m{B}$ extending the identity maps on $\ybar$ and $\xbar,\ybar$, respectively. Define $\Phi\coloneqq\ker(\pi)$, noting that $\ker(\f)=\Phi\cap\Fc(\ybar)^2$. Using  \eqref{id:corresp} in the first and fifth steps and \eqref{id:ep} in the third, 
\begin{align*}
\cg{\m{B}}(\The)\cap A^2
&=\pi[(\cg{\F(\xbar,\ybar)}(\The)\jn\Phi]\cap A^2\\
&\subseteq\pi[(\cg{\F(\xbar,\ybar)}(\The)\jn\Phi)\cap\Fc(\ybar)^2]\\
&=\pi[\cg{\F(\ybar)}(\The)\jn (\Phi\cap\Fc(\ybar)^2)]\\
&=\f[\cg{\F(\ybar)}(\The)\jn (\Phi\cap\Fc(\ybar)^2)]\\
&=\cg{\m{A}}(\f[\The])\\
&=\The\\
&\subseteq\cg{\m{B}}(\The)\cap A^2. \qedhere
\end{align*}
\end{proof}

\begin{example}\label{ex:FLeEP}
A description of the generation of congruences in a variety can be used to establish an explicit version of the \prp{EP}, typically described as a `local deduction theorem'. In particular, every variety of FL$_\text{e}$-algebras $\V$ has the \prp{CEP} and therefore the \prp{EP}, but also, more concretely (see~\cite{MPT23} for details), for any $\Si\subseteq\Eqc(\xbar)$ and $\a,\b\in\Fmc(\xbar)$,
\begin{align*}
\Si\cup\set{\e\le\a}\mdl{\V}\e\le\b  \:\iff\: \Si\mdl{\V} (\a\mt\e)^n\le\b\:\text{ for some } n\in\N.
\end{align*}
Equivalently, if $\lgc{L}$ is an axiomatic extension of $\lgc{FL_e}$, then for any $T\cup\set{\a,\b}\subseteq\Fmc(\xbar)$,
\begin{align*}
T\cup\set{\a}\der{\lgc{L}}\b  \:\iff\: T\mdl{\V} \a^n\ra\b\:\text{ for some } n\in\N,
\end{align*}
which for axiomatic extensions of $\lgc{IPC}$ simplifies to the familiar deduction theorem with $n=1$. Note, however, that the variety of FL-algebras does not have the \prp{CEP} (see, e.g.,~\cite[p.~217]{GJKO07}), so $\lgc{FL}$ does not admit a local deduction theorem of this form. \lipicsEnd
\end{example}

A significant obstacle to establishing an algebraic property such as the \prp{CEP} for a variety is the fact that  in principle it should be established for {\em all} its members. There exist, however, `transfer' results in the literature, however, that reduce such problems to more manageable subclasses, specifically the (finitely) subdirectly irreducible members that serve as  `building blocks' for all  members of the variety. 

A  {\em subdirect product} of a family of $\type$-algebras $\set{\m{B}_i}_{i\in I}$ is a subalgebra $\m{A}$ of $\prod_{i\in I}\m{B}_i$ such that  the projection map $\pi_i\colon\m{A}\to\m{B}_i;\: a\mapsto a(i)$ is surjective for each $i\in I$. An $\type$-algebra $\m{A}$ is  {\em (finitely) subdirectly irreducible} if for any isomorphism $\f$ between $\m{A}$ and a subdirect product of a (non-empty finite) family of $\type$-algebras $\set{\m{B}_i}_{i\in I}$, there is an $i\in I$ such that $\pi_i\f$ is an isomorphism.  Equivalently, an $\type$-algebra $\m{A}$ is subdirectly irreducible if $\De_A$ is completely meet-irreducible in $\Con\m{A}$ and finitely subdirectly irreducible if $\De_A$ is meet-irreducible in $\Con\m{A}$.\footnote{An element $a$ of a lattice $\m{L}$ is {\em meet-irreducible} if $a=b\mt c$ implies $a=b$ or $a=c$, and this is true of the greatest element $\top$ of $\m{L}$ if it has one; however, $a$ is {\em completely meet-irreducible} if $a=\bigwedge B$ implies $a\in B$ for any $B\subseteq L$, which is not the case for $\top=\bigwedge\emptyset$. In particular, we assume here that trivial algebras are finitely subdirectly irreducible  (following, e.g.,~\cite{CD90}) but not subdirectly irreducible.} By Birkhoff's subdirect representation theorem  (see, e.g.,~\cite[Theorem~B.7]{MPT23}), every $\type$-algebra is isomorphic to a subdirect product of subdirectly irreducible $\type$-algebras.

Let $\Vsi$ and $\Vfsi$ denote the classes of subdirectly irreducible and finitely subdirectly irreducible members of $\V$, respectively. Under certain conditions, properties such as the \prp{CEP} transfer from $\Vsi$ or $\Vfsi$ to $\V$ and, in some cases, back again. Often it is easier to consider the larger class $\Vfsi$. In particular, if $\V$ has equationally definable principal congruence meets (a common property for the algebraic semantics of a propositional logic that corresponds to having a suitable disjunction connective), then $\Vfsi$ is a universal class~\cite[Theorem~2.3]{CD90}.  

To obtain a transfer theorem for the \prp{CEP}, we require that $\V$ be {\em congruence-distributive}, that is, $\Con\m{A}$ should be distributive for every $\m{A}\in\V$. Since any $\type$-algebra with a lattice reduct is congruence-distributive, this requirement is fulfilled by the algebraic semantics of broad families of propositional logics.

\begin{theorem}[{\cite[Corollary~2.4]{FM24}}]\label{t:CEPvar}
Let $\V$ be any congruence-distributive variety.  Then $\V$ has the congruence extension property if, and only if, $\Vfsi$ has the congruence extension property. 
\end{theorem}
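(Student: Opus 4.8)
The forward implication is immediate: $\Vfsi\subseteq\V$, and both subalgebras and congruence quotients of members of $\Vfsi$ again lie in $\V$, so any congruence extension supplied by the \prp{CEP} for $\V$ also serves $\Vfsi$.

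For the converse, suppose $\Vfsi$ has the \prp{CEP}. The first step I would take is to reduce the problem to extending congruences $\Psi$ for which the quotient is subdirectly irreducible: given a subalgebra $\m{A}$ of $\m{B}\in\V$ and $\The\in\Con\m{A}$, applying Birkhoff's subdirect representation theorem to $\m{A}/\The$ and pulling back yields completely meet-irreducible congruences $\Psi_j\in\Con\m{A}$ with $\The\subseteq\Psi_j$ and $\The=\bigcap_j\Psi_j$, and each $\m{A}/\Psi_j$ is then subdirectly irreducible. If each such $\Psi_j$ extends to some $\Phi_j\in\Con\m{B}$ with $\Phi_j\cap A^2=\Psi_j$, then $\bigcap_j\Phi_j$ witnesses the \prp{CEP} for $\The$. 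Note that congruence-distributivity is not used in this reduction.

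So fix a subalgebra $\m{A}$ of $\m{B}\in\V$ and a completely meet-irreducible $\Psi\in\Con\m{A}$. The monolith of the subdirectly irreducible algebra $\m{A}/\Psi$ is generated by a single pair $\tuple{e/\Psi,g/\Psi}$ with $e,g\in A$ and $\tuple{e,g}\notin\Psi$, and its defining property is that every $\ga\in\Con\m{A}$ with $\ga\not\subseteq\Psi$ satisfies $\tuple{e,g}\in\ga\jn\Psi$. I would then consider the family $\mathcal{X}\coloneqq\set{\Xi\in\Con\m{B}\mid\Xi\cap A^2\subseteq\Psi}$, which contains $\De_B$ and is closed under up-directed unions (a directed union of congruences is a congruence, and intersection with $A^2$ commutes with it), and apply Zorn's lemma to obtain a maximal element $\Xi^*$. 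The crux — and the step I expect to be the main obstacle — is to show that $\Xi^*$ is not merely maximal in $\mathcal{X}$ but \emph{meet-irreducible} in $\Con\m{B}$, and this is exactly where congruence-distributivity enters. Indeed, if $\Xi^*=\Ga_1\cap\Ga_2$ with $\Ga_1,\Ga_2$ both strictly above $\Xi^*$, then maximality of $\Xi^*$ forces $\Ga_1\cap A^2\not\subseteq\Psi$ and $\Ga_2\cap A^2\not\subseteq\Psi$, whence $\tuple{e,g}\in(\Ga_k\cap A^2)\jn\Psi$ for $k=1,2$, and distributivity of $\Con\m{A}$ gives
\[
\tuple{e,g}\in\bigl((\Ga_1\cap A^2)\jn\Psi\bigr)\cap\bigl((\Ga_2\cap A^2)\jn\Psi\bigr)=\bigl((\Ga_1\cap\Ga_2)\cap A^2\bigr)\jn\Psi=(\Xi^*\cap A^2)\jn\Psi=\Psi,
\]
contradicting $\tuple{e,g}\notin\Psi$.

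It follows that $\m{B}/\Xi^*$ is \emph{finitely} subdirectly irreducible — which is precisely why the theorem is stated for $\Vfsi$ rather than $\Vsi$ — so $\m{B}/\Xi^*\in\Vfsi$. Since $\Xi^*\cap A^2\subseteq\Psi$, the algebra $\m{A}/(\Xi^*\cap A^2)$ embeds into $\m{B}/\Xi^*$ and $\Psi$ induces a congruence on it; extending that congruence to $\m{B}/\Xi^*$ by the \prp{CEP} for $\Vfsi$ and pulling the extension back along the quotient map $\m{B}\ra\m{B}/\Xi^*$ produces the required $\Phi\in\Con\m{B}$ with $\Phi\cap A^2=\Psi$, which is routine bookkeeping with the correspondence theorem. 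Thus the only genuinely non-routine point is the meet-irreducibility of $\Xi^*$: one must recognise that a Zorn-maximal element of $\mathcal{X}$ is meet-irreducible, that distributivity of $\Con\m{A}$ is exactly what delivers this, and that landing in $\Vfsi$ rather than $\Vsi$ is both sufficient and all one can hope for, since such a maximal element need not be \emph{completely} meet-irreducible.
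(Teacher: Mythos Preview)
The paper does not prove this theorem; it is stated with a citation to \cite[Corollary~2.4]{FM24} and used as a black box, so there is no in-paper argument to compare against.

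Your proof is correct and follows what is essentially the standard route. The reduction to completely meet-irreducible $\Psi$ via Birkhoff is sound, and the Zorn argument producing a maximal $\Xi^*\in\mathcal{X}$ is properly justified. The key step --- showing $\Xi^*$ is meet-irreducible --- is handled correctly: you use distributivity of $\Con\m{A}$ (not $\Con\m{B}$), which is exactly what is needed, since $(\Ga_1\cap A^2)$, $(\Ga_2\cap A^2)$, and $\Psi$ all live in $\Con\m{A}$, and the identity $(x\jn z)\mt(y\jn z)=(x\mt y)\jn z$ together with $(\Ga_1\cap A^2)\cap(\Ga_2\cap A^2)=(\Ga_1\cap\Ga_2)\cap A^2$ (a set-theoretic triviality, since meets are intersections) delivers the contradiction. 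The final transfer via the \prp{CEP} for $\Vfsi$ is also fine: note that the paper's formulation of the \prp{CEP} for a class $\K$ only requires the ambient algebra $\m{B}$ to lie in $\K$, not the subalgebra, so it does not matter whether $\m{A}/(\Xi^*\cap A^2)$ is itself finitely subdirectly irreducible. Your closing observation that Zorn-maximality yields only finite meet-irreducibility, and that this is precisely why the statement is formulated for $\Vfsi$ rather than $\Vsi$, is exactly the point.
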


\begin{example}\label{e:CEPforBDL}
Theorem~\ref{t:CEPvar}  can drastically reduce the amount of work needed to check if a variety has the \prp{CEP}.  For example, the variety $\BDLat$ of bounded distributive lattices is congruence-distributive and $\BDLat_{_\textup{FSI}}$ contains,  up to isomorphism, only the trivial and two-element bounded lattices. Since $\BDLat_{_\textup{FSI}}$ clearly has the \prp{CEP}, so does $\BDLat$. \lipicsEnd
\end{example}

\begin{remark}
For a congruence-distributive variety $\V$, each member of $\Vfsi$ embeds into an ultraproduct of members of $\Vsi$~\cite[Lemma~1.5]{CD90}. Theorem~\ref{t:CEPvar} therefore implies that if  $\V$ is a congruence-distributive variety and $\Vsi$ is an elementary class, then $\V$ has the $\prp{CEP}$ if, and only if, $\Vsi$ has the $\prp{CEP}$. The latter was first proved in~\cite[Theorem~3.3]{Dav77} and follows also from a similar, but seemingly distinct, result for congruence-modular varieties~\cite[Theorem~2.3]{Kis85b}. \lipicsEnd
\end{remark}


\section{Amalgamation and the Robinson Property}\label{s:amalgamation}

In this section, we construct a bridge for varieties between the amalgamation property and a property of equational consequence known as the Robinson property, first established in~\cite{Pig72}.  

Let us fix again an algebraic language $\type$ that has at least one constant symbol and let $\K$ and $\K'$ be any classes of $\type$-algebras.  An {\em amalgam} in $\K'$ of a doubly injective span $\tuple{\m{A},\m{B},\m{C},\f_B,\f_C}$ in $\K$ is a triple $\tuple{\m{D},\ps_B,\ps_C}$ consisting of an algebra $\m{D}\in\K'$ and embeddings $\ps_B\colon\m{B}\ra\m{D}$ and $\ps_C\colon\m{C}\ra\m{D}$ such that $\ps_B\f_B = \ps_C\f_C$.

\begin{property}
A class $\K$ of $\type$-algebras has the {\em amalgamation property} (\prp{AP}) if every doubly injective span in $\K$ has an amalgam in $\K$ (see Figure~\ref{f:properties}(ii)). 
\end{property}

\noindent
Let us note also in passing that a class $\K$ of $\type$-algebras has the {\em strong amalgamation property} (strong \prp{AP}) if every doubly injective span $\tuple{\m{A},\m{B},\m{C},\f_B,\f_C}$ in $\K$ has an amalgam $\tuple{\m{D},\ps_B,\ps_C}$ in $\K$ satisfying $\ps_B\f_B[A] =\ps_B[B]\cap\ps_C[C]=\ps_C\f_C[A]$.

\begin{example}\label{e:BLatAP}
The variety $\BLat$ of bounded lattices has the strong \prp{AP}~\cite{Jon56}. Consider any doubly injective span $\tuple{\m{A},\m{B},\m{C},\f_B,\f_C}$ in $\BLat$,  assuming without loss of generality that $\f_B$ and $\f_C$ are inclusion maps and $A = B\cap C$. Let $R\coloneqq{\le^\m{B}\cup\le^\m{C}}$ and define for $x,y\in B\cup C$,
\begin{align*}
x\preceq y\,:\Longleftrightarrow\: Rxy\;\text{ or }\; (Rxz \text{ and }Rzy\text{, for some }z\in B\cap C).
\end{align*}
Then $\preceq$ is the smallest partial order on $B\cup C$ extending $\le^\m{B}$ and $\le^\m{C}$. Now let $\m{D}$ be the Dedekind-MacNeille completion of the poset $\tuple{B\cup C,\preceq}$, i.e., the set of subsets $X$ of $B\cup C$ satisfying $(X^u)^l = X$, ordered by set-inclusion, where $Y^l$ and $Y^u$ denote the sets of lower bounds and upper bounds of $Y\subseteq B\cup C$, respectively. Then the maps $\ps_B$ and $\ps_C$ sending an element $x$ to $\set{x}^l$ in $\tuple{B\cup C,\preceq}$ are embeddings of $\m{B}$ and $\m{C}$, respectively, into $\m{D}$, and satisfy $\ps_B(x) =\ps_C(x)$, for each $x\in A$. Moreover, since any element in $\ps_B[B]\cap\ps_C[C]$ is of the form $\set{b}^l=\ps_B(b)=\ps_C(c)= \set{c}^l$ for some $b\in B$ and $c\in C$, we obtain $b=c\in A$. Hence  $\ps_B\f_B[A] =\ps_B[B]\cap\ps_C[C]=\ps_C\f_C[A]$. \lipicsEnd
\end{example}

\begin{example}\label{e:HeytingAP}
The method described in Example~\ref{e:BLatAP} is easily adapted to establish that the variety of bounded semilattices has the strong \prp{AP}~\cite{Fle76}, and, with considerably more effort, can then be used to prove that the varieties of implicative semilattices and Heyting algebras have this property~\cite{Fle80}. The fact that the variety of Heyting algebras has the strong \prp{AP} was first proved in~\cite{Day??} and an alternative categorical proof may be found in~\cite{Pit83}.  \lipicsEnd
\end{example}

\begin{example}\label{e:monoidsAP}
Schreier's work on free amalgamated products implies that the variety of groups has the strong \prp{AP}~\cite{Sch27}, and it is not hard to see that the variety of Abelian groups also has this property. On the other hand, the varieties of monoids and commutative monoids do not even have the \prp{AP}. For a counterexample (adapted from~\cite{Kim57}), consider the commutative monoids $\m{A}$, $\m{B}$, and $\m{C}$  with $A=\set{u,v,w,0,\e}$, $B=A\cup\set{b}$, and $C=A\cup\set{c}$, where $\e$ is the neutral element, $bu=ub=v$, $cv=vc=w$, and all other products are $0$. If the doubly injective span $\tuple{\m{A},\m{B},\m{C},\f_B,\f_C}$, with inclusion maps $\f_B$, $\f_C$, were to have an amalgam $\tuple{\m{D},\ps_B,\ps_C}$ in the variety of monoids, then $\ps_C(w)=\ps_C(v)\ps_C(c)=\ps_B(v)\ps_C(c)=\ps_B(b)\ps_B(u)\ps_C(c)=\ps_B(b)\ps_C(u)\ps_C(c)=\ps_B(b)\ps_C(0)=\ps_B(b)\ps_B(0)=\ps_B(0)=\ps_C(0)$, contradicting $w\neq 0$.  \lipicsEnd
\end{example}

We now introduce the relevant property of equational consequence for varieties. 

\begin{property}
A variety $\V$ has the {\em Robinson property} (\prp{RP}) if for any $\Si\subseteq\Eqc(\xbar,\ybar)$ and $\Pi\subseteq\Eqc(\ybar,\zbar)$ satisfying $\Si\mdl{\V}\de\iff\Pi\mdl{\V}\de$ for all $\de\in\Eqc(\ybar)$, it follows that $\Si\cup\Pi\mdl{\V}\eps \iff\Pi\mdl{\V}\eps$ for any $\eps\in\Eqc(\ybar,\zbar)$.
\end{property}

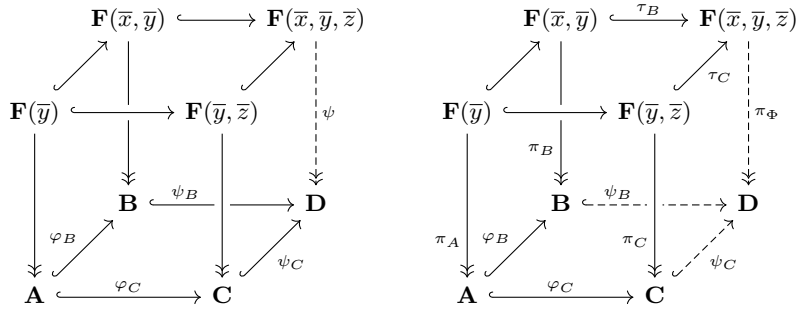
\begin{figure}[t]
\centering
\begin{small}
$\begin{array}{ccc}
\begin{tikzcd}[row sep={35,between origins}, column sep={35,between origins}]
 &\F(\xbar,\ybar)\ar[rr, hook]\ar[dd, ->>] & &\F(\xbar,\ybar,\zbar)\ar[dd, ->>, dashed, "\ps"]\\
\F(\ybar)\ar[ru, hook]\ar[rr, hook, crossing over]\ar[dd, ->>] &&\F(\ybar,\zbar)\ar[ru, hook]& \\
 &\m{B}\ar[rr, hook, "\ps_B" near start] &&\m{D}\\
\m{A}\ar[ru, hook, "\f_B"]\ar[rr, hook, "\f_C"] &&\m{C}\ar[ru, hook, "\ps_C" swap]\ar[from=uu, ->>, crossing over] &
\end{tikzcd}
&&
\begin{tikzcd}[row sep={35,between origins}, column sep={35,between origins}]
 &\F(\xbar,\ybar)\ar[rr, hook, "\tau_B"]\ar[dd, ->>, "\pi_B" swap, near end] & &\F(\xbar,\ybar,\zbar)\ar[dd, ->>, dashed, "\cmap{\Phi}"]\\
\F(\ybar)\ar[ru, hook]\ar[rr, hook, crossing over]\ar[dd, ->>, "\pi_A" swap, near end] &&\F(\ybar,\zbar)\ar[ru, hook, "\tau_C" swap]& \\
 &\m{B}\ar[rr, dashed, hook, "\ps_B" near start] &&\m{D}\\
\m{A}\ar[ru, hook, "\f_B"]\ar[rr, hook, "\f_C"] &&\m{C}\ar[ru, dashed, hook, "\ps_C" swap]\ar[from=uu, ->>, crossing over, "\pi_C" swap, near end] &
\end{tikzcd}
\end{array}$
\end{small}
\caption{Commutative diagrams for the proof of Theorem~\ref{t:AP<=>RP}}
\label{f:comdiag}
\end{figure}

\begin{theorem}\label{t:AP<=>RP}
A variety $\V$ has the amalgamation property if, and only if, it has the Robinson property.
\end{theorem}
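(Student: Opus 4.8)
The strategy is to translate the \prp{AP} into a statement about congruences of free algebras via Lemma~\ref{l:bridge}, in the same spirit as the recasting of the \prp{CEP} used to prove Theorem~\ref{t:cepiffep}; the two commutative diagrams of Figure~\ref{f:comdiag} record the constructions. Throughout, I pass between an algebra in $\V$ and a free algebra of $\V$ mapping onto it, and between equational consequence and congruence generation, using Lemma~\ref{l:bridge} as the dictionary.

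For \prp{RP}~$\Rightarrow$~\prp{AP}, I start from a doubly injective span $\tuple{\m{A},\m{B},\m{C},\f_B,\f_C}$ in $\V$, assuming without loss of generality that $\f_B,\f_C$ are inclusions and $B\cap C=A$. Put $\ybar\coloneqq A$, $\xbar\coloneqq B\setminus A$, $\zbar\coloneqq C\setminus A$, and let $\pi_A\colon\F(\ybar)\ra\m{A}$, $\pi_B\colon\F(\xbar,\ybar)\ra\m{B}$, $\pi_C\colon\F(\ybar,\zbar)\ra\m{C}$ be the surjections extending the respective identity maps, with congruences $\Si\coloneqq\ker(\pi_B)$ and $\Pi\coloneqq\ker(\pi_C)$. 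Since $\m{A}$ is a subalgebra of both $\m{B}$ and $\m{C}$, one checks $\Si\cap\Fc(\ybar)^2=\ker(\pi_A)=\Pi\cap\Fc(\ybar)^2$, and, as $\Si$ and $\Pi$ are congruences, Lemma~\ref{l:bridge} turns this into the hypothesis of the \prp{RP}: $\Si\mdl{\V}\de\iff\Pi\mdl{\V}\de$ for all $\de\in\Eqc(\ybar)$. I then apply the \prp{RP} twice --- once as stated, and once with the roles of $\Si,\Pi$ (equivalently, of $\xbar,\zbar$) interchanged, which is legitimate since the hypothesis is symmetric --- obtaining $\Si\cup\Pi\mdl{\V}\eps\iff\Pi\mdl{\V}\eps$ for $\eps\in\Eqc(\ybar,\zbar)$ and $\Si\cup\Pi\mdl{\V}\eps\iff\Si\mdl{\V}\eps$ for $\eps\in\Eqc(\xbar,\ybar)$. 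Setting $\Phi\coloneqq\cg{\F(\xbar,\ybar,\zbar)}(\Si\cup\Pi)$ and $\m{D}\coloneqq\F(\xbar,\ybar,\zbar)/\Phi\in\V$, Lemma~\ref{l:bridge} gives $\Phi\cap\Fc(\xbar,\ybar)^2=\Si=\ker(\pi_B)$ and $\Phi\cap\Fc(\ybar,\zbar)^2=\Pi=\ker(\pi_C)$; by Theorem~\ref{t:general_homomorphism_theorem} the restrictions of $\cmap{\Phi}$ to $\F(\xbar,\ybar)$ and $\F(\ybar,\zbar)$ thus factor through $\pi_B$ and $\pi_C$ as embeddings $\ps_B\colon\m{B}\ra\m{D}$ and $\ps_C\colon\m{C}\ra\m{D}$, and $\ps_B\f_B=\ps_C\f_C$ follows by evaluating both sides on $\F(\ybar)$. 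This is the desired amalgam.

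For \prp{AP}~$\Rightarrow$~\prp{RP}, fix $\Si\subseteq\Eqc(\xbar,\ybar)$ and $\Pi\subseteq\Eqc(\ybar,\zbar)$ with $\Si\mdl{\V}\de\iff\Pi\mdl{\V}\de$ for all $\de\in\Eqc(\ybar)$; by monotonicity only the implication $\Si\cup\Pi\mdl{\V}\eps\Rightarrow\Pi\mdl{\V}\eps$ for $\eps\in\Eqc(\ybar,\zbar)$ needs proof. I build a span by setting $\Psi\coloneqq\cg{\F(\xbar,\ybar)}(\Si)\cap\Fc(\ybar)^2$, which by Lemma~\ref{l:bridge} and the hypothesis equals $\cg{\F(\ybar,\zbar)}(\Pi)\cap\Fc(\ybar)^2$, and then $\m{A}\coloneqq\F(\ybar)/\Psi$, $\m{B}\coloneqq\F(\xbar,\ybar)/\cg{\F(\xbar,\ybar)}(\Si)$, $\m{C}\coloneqq\F(\ybar,\zbar)/\cg{\F(\ybar,\zbar)}(\Pi)$, all in $\V$, with embeddings $\f_B\colon\m{A}\ra\m{B}$ and $\f_C\colon\m{A}\ra\m{C}$ supplied by Theorem~\ref{t:general_homomorphism_theorem} (this is exactly where the hypothesis on $\Si$ and $\Pi$ enters). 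The \prp{AP} yields an amalgam $\tuple{\m{D},\ps_B,\ps_C}$ in $\V$, and the universal mapping property of $\F(\xbar,\ybar,\zbar)$ produces a homomorphism $h\colon\F(\xbar,\ybar,\zbar)\ra\m{D}$ restricting to $\ps_B\pi_B$ on $\F(\xbar,\ybar)$ and to $\ps_C\pi_C$ on $\F(\ybar,\zbar)$; the two prescriptions agree on $\ybar$ precisely because $\ps_B\f_B=\ps_C\f_C$. Since $\ps_B$ and $\ps_C$ are injective, $\ker(h)\supseteq\Si\cup\Pi$ while $\ker(h)\cap\Fc(\ybar,\zbar)^2=\cg{\F(\ybar,\zbar)}(\Pi)$. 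Hence $\Si\cup\Pi\mdl{\V}\eps$ forces $\eps\in\ker(h)$, and for $\eps\in\Eqc(\ybar,\zbar)$ this gives $\eps\in\cg{\F(\ybar,\zbar)}(\Pi)$, i.e.\ $\Pi\mdl{\V}\eps$, by Lemma~\ref{l:bridge}.

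I expect the difficulty to be organizational rather than conceptual: choosing the three free algebras over the correct index sets, tracking how each map in Figure~\ref{f:comdiag} restricts to the common free subalgebra $\F(\ybar)$, and keeping every kernel computation synchronized with Lemma~\ref{l:bridge}. The one genuinely load-bearing point is that in the \prp{RP}~$\Rightarrow$~\prp{AP} direction the \prp{RP} must be invoked in both orientations, since a single application secures an embedding on only one of the two sides of the amalgam.
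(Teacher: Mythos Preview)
Your proof is correct and follows essentially the same route as the paper's: both directions construct the same span of quotients of free algebras and the same amalgam $\m{D}=\F(\xbar,\ybar,\zbar)/\Phi$, using Lemma~\ref{l:bridge} and Theorem~\ref{t:general_homomorphism_theorem} exactly as the paper does. The only cosmetic difference is that the paper first packages the \prp{RP} into a symmetric congruence-theoretic reformulation (existence of a $\Phi$ restricting correctly on both sides) and then proves \prp{AP}~$\Leftrightarrow$~that reformulation, whereas you work directly with the \prp{RP} and make the double application explicit; your remark that the \prp{RP} must be invoked in both orientations is precisely what the paper absorbs into that reformulation.
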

\begin{proof}
Observe first, using Lemma~\ref{l:bridge} to translate between sets of equations and congruences of free algebras, that $\V$ has the \prp{RP}  if, and only if, for any $\The \in \Con \F(\xbar,\ybar)$ and $\Psi \in \Con \F(\ybar,\zbar)$ satisfying $\The \cap \Fc(\ybar)^2 = \Psi \cap \Fc(\ybar)^2$, there exists a $\Phi \in \Con \F(\xbar,\ybar,\zbar)$  satisfying $\The = \Phi \cap \Fc(\xbar,\ybar)^2$ and $\Psi = \Phi \cap \Fc(\ybar,\zbar)^2$. 

For the left-to-right direction, suppose that $\V$ has the \prp{AP} and consider any $\The \in \Con \F(\xbar,\ybar)$ and $\Psi \in \Con \F(\ybar,\zbar)$ satisfying $\The_0\coloneqq\The \cap \Fc(\ybar)^2 = \Psi \cap \Fc(\ybar)^2$. Define $\m{A}\coloneqq\F(\ybar)/\The_0$, $\m{B}\coloneqq\F(\xbar,\ybar)/\The$, and $\m{C}\coloneqq\F(\ybar,\zbar)/\Psi$. It follows that $\f_B\colon\m{A}\ra\m{B};\: [\a]_{\The_0}\mapsto [\a]_{\The}$ and $\f_C\colon\m{A}\ra\m{C};\: [\a]_{\The_0}\mapsto [\a]_{\Psi}$ are embeddings, and hence, by assumption, $\tuple{\m{A},\m{B},\m{C},\f_B,\f_C}$ has an amalgam $\tuple{\m{D},\ps_B,\ps_C}$ in $\V$. Moreover, we may assume without loss of generality that $\m{D}$ is generated by $\ps_B[B]\cup\ps_C[C]$. 

Let $\ps\colon\F(\xbar,\ybar,\zbar)\ra\m{D}$ be the unique surjective homomorphism that maps each $x\in\xbar$ to $\ps_B([x]_\The)$, each $y\in\ybar$ to $\ps_B([y]_\The)=\ps_C([y]_\Psi)$, and each $z\in\zbar$ to $\ps_C([z]_\Psi)$, as illustrated in the leftmost diagram of Figure~\ref{f:comdiag}. 
Let $\Phi\coloneqq\ker(\ps)$. We claim that $\The = \Phi \cap \Fc(\xbar,\ybar)^2$ and $\Psi = \Phi \cap \Fc(\ybar,\zbar)^2$, proving just the non-trivial inclusion of the first equality. Let $\tuple{\a,\b}\in\Phi\cap\Fc(\xbar,\ybar)^2$. Since $\ps$ is determined by the prescribed values for the generators of $\F(\xbar,\ybar,\zbar)$, clearly $\ps(\a)=\ps_B([\a]_{\The})$ and $\ps(\b)=\ps_B([\b]_{\The})$. But $\tuple{\a,\b}\in\Phi=\ker(\ps)$, so $\ps_B([\a]_{\The})=\ps(\a)=\ps(\b)=\ps_B([\b]_{\The})$, and,  since $\ps_B$ is injective, $[\a]_{\The}=[\b]_{\The}$ and $\tuple{\a,\b}\in\The$. 

For the right-to-left direction, suppose that $\V$ satisfies the above condition and consider any doubly injective span $\tuple{\m{A},\m{B},\m{C},\f_B,\f_C}$ in $\V$, assuming without loss of generality that $\f_B$ and $\f_C$ are inclusion maps with $B=\xbar,\ybar$, $C=\ybar,\zbar$, and $A=B\cap C=\ybar$. Extending identity maps, we obtain surjective homomorphisms $\pi_A\colon\F(\ybar)\ra\m{A}$, $\pi_B\colon\F(\xbar,\ybar)\ra\m{B}$, and $\pi_C\colon\F(\ybar,\zbar)\ra\m{C}$. Define $\The_0\coloneqq\ker(\pi_A)$, $\The\coloneqq\ker(\pi_B)$, and $\Psi\coloneqq\ker(\pi_C)$, observing that $\The_0=\The\cap\Fc(\ybar)^2=\Psi\cap\Fc(\ybar)^2$. By assumption, there exists a $\Phi\in\Con\F(\xbar,\ybar,\zbar)$ such that $\The=\Phi\cap\Fc(\xbar,\ybar)^2$ and $\Psi=\Phi\cap\Fc(\ybar,\zbar)^2$.  Define $\m{D}\coloneqq\F(\xbar,\ybar,\zbar)/\Phi$ and let $\cmap{\Phi}\colon\F(\xbar,\ybar,\zbar)\ra\m{D};\;\a\mapsto [\a]_\Phi$ be the canonical map and $\tau_B\colon\F(\xbar,\ybar)\ra\F(\xbar,\ybar,\zbar)$ and $\tau_C\colon\F(\ybar,\zbar)\ra\F(\xbar,\ybar,\zbar)$ be inclusion maps. Then $\ker(\cmap{\Phi}\tau_B)=\ker(\pi_B)$ and $\ker(\cmap{\Phi}\tau_C)=\ker(\pi_C)$. Hence, by Theorem~\ref{t:general_homomorphism_theorem}, there exist embeddings $\ps_B\colon\m{B}\ra\m{D}$ and $\ps_C\colon \m{C}\ra\m{D}$ such that $\ps_B\pi_B=\cmap{\Phi}\tau_B$ and $\ps_C\pi_C=\cmap{\Phi}\tau_C$, as illustrated by the rightmost diagram of Figure~\ref{f:comdiag}.  So $\ps_B\f_B =\ps_C\f_C$ and $\tuple{\m{D},\ps_B,\ps_C}$ is an amalgam of $\tuple{\m{A},\m{B},\m{C},\f_B,\f_C}$ in $\V$. 
\end{proof}

In order to obtain a precise match between amalgamation in a variety $\V$ and the subclass $\Vfsi$, we consider a property that is, in some contexts at least, weaker than the \prp{AP}.

\begin{property}
A class $\K$ of $\type$-algebras has the {\em one-sided amalgamation property} (\prp{1AP}) if  for any doubly injective span $\tuple{\m{A},\m{B},\m{C},\f_B,\f_C}$ in $\K$, there exist a $\m{D}\in\K$,  a homomorphism $\ps_B\colon\m{B}\ra\m{D}$, and an embedding $\ps_C\colon\m{C}\ra\m{D}$ such that $\ps_B \f_B=\ps_C\f_C$  (see Figure~\ref{f:properties}(iii)).
\end{property}

\noindent
In fact, a variety $\V$ has the \prp{1AP} if, and only, if it has the \prp{AP}. For the non-trivial direction, we apply the \prp{1AP} to a doubly injective span $\tuple{\m{A},\m{B},\m{C},\f_B,\f_C}$ and then again to the doubly injective span $\tuple{\m{A},\m{C},\m{B},\f_C,\f_B}$ to obtain $\m{D}_1,\m{D}_2\in\V$ with appropriate homomorphisms and embeddings, and obtain an amalgam $\m{D}_1\mathop{\times}\m{D}_2\in\V$ equipped with the induced embeddings. 

We now state a useful transfer theorem for the \prp{AP} and \prp{1AP}, recalling that many varieties that serve as algebraic semantics for propositional logics have both the \prp{CEP} --- e.g., via a local deduction theorem --- and a class of finitely subdirectly irreducible algebras that is closed under taking subalgebras --- often by virtue of having a suitable disjunction connective.

\begin{theorem}[{\cite[Corollary~3.5]{FM24}}]\label{t:APvar}
Let $\V$ be any variety with the congruence extension property such that $\Vfsi$ is closed under taking subalgebras. Then $\V$ has the amalgamation property if, and only if, $\Vfsi$ has the one-sided amalgamation property.
\end{theorem}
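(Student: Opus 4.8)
The plan is to route both implications through a single observation about quotients, which I would record as a preliminary lemma. The device is as follows: let $\m{E}\in\V$, let $\m{S}$ be a subalgebra of $\m{E}$ with $\m{S}\in\Vfsi$, and choose, by Zorn's lemma, a congruence $\Phi\in\Con\m{E}$ maximal subject to $\Phi\cap S^2=\De_S$. Then $\cmap{\Phi}$ restricts to an embedding of $\m{S}$ into $\m{E}/\Phi$ and --- crucially --- $\m{E}/\Phi\in\Vfsi$. For the latter: if $\Phi=\alpha\cap\beta$ with $\alpha,\beta\in\Con\m{E}$ both strictly above $\Phi$, then maximality forces $\alpha\cap S^2$ and $\beta\cap S^2$ to be congruences of $\m{S}$ strictly above $\De_S$ with $(\alpha\cap S^2)\cap(\beta\cap S^2)=(\alpha\cap\beta)\cap S^2=\Phi\cap S^2=\De_S$, contradicting meet-irreducibility of $\De_S$ in $\Con\m{S}$ (which is exactly what $\m{S}\in\Vfsi$ supplies); hence $\Phi$ is meet-irreducible in $\Con\m{E}$, so $\m{E}/\Phi$ is finitely subdirectly irreducible, that is, lies in $\Vfsi$. (If $\m{S}$ is trivial, take $\Phi$ to be the total congruence, so $\m{E}/\Phi$ is a trivial algebra and hence in $\Vfsi$ by convention.)

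For the forward direction, assume $\V$ has the \prp{AP} and let $\tuple{\m{A},\m{B},\m{C},\f_B,\f_C}$ be a doubly injective span in $\Vfsi\subseteq\V$. The \prp{AP} provides an amalgam $\tuple{\m{E},\ps'_B,\ps'_C}$ in $\V$. Applying the device with $\m{S}\coloneqq\ps'_C[\m{C}]\cong\m{C}\in\Vfsi$ produces $\m{D}\coloneqq\m{E}/\Phi\in\Vfsi$, an embedding $\ps_C\coloneqq\cmap{\Phi}\ps'_C\colon\m{C}\ra\m{D}$, and a homomorphism $\ps_B\coloneqq\cmap{\Phi}\ps'_B\colon\m{B}\ra\m{D}$ with $\ps_B\f_B=\cmap{\Phi}\ps'_B\f_B=\cmap{\Phi}\ps'_C\f_C=\ps_C\f_C$; this is precisely an instance of the \prp{1AP} for $\Vfsi$. (This direction uses neither the \prp{CEP} nor closure of $\Vfsi$ under subalgebras.)

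For the converse, assume $\Vfsi$ has the \prp{1AP}. Recalling that a variety has the \prp{1AP} if and only if it has the \prp{AP}, it is enough to verify the \prp{1AP} for $\V$. So let $\tuple{\m{A},\m{B},\m{C},\f_B,\f_C}$ be a doubly injective span in $\V$, taken with $\f_B,\f_C$ inclusions and $A=B\cap C$; I must produce $\m{D}\in\V$, a homomorphism $\ps_B\colon\m{B}\ra\m{D}$, and an embedding $\ps_C\colon\m{C}\ra\m{D}$ agreeing on $\m{A}$. Using Birkhoff's subdirect representation theorem, fix a subdirect embedding of $\m{C}$ into $\prod_{k\in K}\m{C}_k$ with each $\m{C}_k\in\Vsi\subseteq\Vfsi$, and let $q_k\colon\m{C}\ra\m{C}_k$ be the (surjective) projections. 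Fix $k\in K$: the image $\m{A}_k\coloneqq q_k[A]$ is a subalgebra of $\m{C}_k$, hence $\m{A}_k\in\Vfsi$ by hypothesis; write $p_k\colon\m{A}\ra\m{A}_k$ for the restriction of $q_k$ to $A$. By the \prp{CEP}, the congruence $\Theta_k\coloneqq\cg{\m{B}}(\ker p_k)$ satisfies $\Theta_k\cap A^2=\ker p_k$, so $\m{A}_k\cong\m{A}/\ker p_k$ embeds, via some $\iota_k$, into $\m{B}_k\coloneqq\m{B}/\Theta_k\in\V$. Now apply the device to $\m{B}_k$ with $\m{S}\coloneqq\iota_k[\m{A}_k]\cong\m{A}_k\in\Vfsi$ to obtain $\m{B}'_k\coloneqq\m{B}_k/\Psi_k\in\Vfsi$ together with an embedding $\iota'_k\colon\m{A}_k\ra\m{B}'_k$. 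The span $\tuple{\m{A}_k,\m{B}'_k,\m{C}_k,\iota'_k,j_k}$, where $j_k\colon\m{A}_k\ra\m{C}_k$ is the inclusion, is doubly injective \emph{and} lies entirely in $\Vfsi$, so the \prp{1AP} for $\Vfsi$ yields $\m{D}_k\in\Vfsi$, a homomorphism $\si_k\colon\m{B}'_k\ra\m{D}_k$, and an embedding $\rho_k\colon\m{C}_k\ra\m{D}_k$ with $\si_k\iota'_k=\rho_k j_k$. Finally put $\m{D}\coloneqq\prod_{k\in K}\m{D}_k\in\V$, let $\ps_C\colon\m{C}\ra\m{D}$ be the homomorphism with components $\rho_k q_k$ --- an embedding, since each $\rho_k$ is injective and $\m{C}$ sits subdirectly in $\prod_{k\in K}\m{C}_k$ --- and let $\ps_B\colon\m{B}\ra\m{D}$ have components $\si_k\cmap{\Psi_k}\cmap{\Theta_k}$; a short diagram chase using $\si_k\iota'_k=\rho_k j_k$ and the identity $[a]_{\Theta_k}=\iota_k(q_k(a))$ for $a\in A$ shows $\ps_B\f_B=\ps_C\f_C$. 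Hence $\V$ has the \prp{1AP}, and therefore the \prp{AP}.

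The heart of the argument --- and its only delicate point --- is the coordinatewise reduction in the converse to a doubly injective span lying inside $\Vfsi$. Two of the three vertices come essentially for free ($\m{C}_k$ from the subdirect decomposition of $\m{C}$, and $\m{A}_k$ from closure of $\Vfsi$ under subalgebras), but steering the $\m{B}$-vertex into $\Vfsi$ is where both standing hypotheses are genuinely used: first the \prp{CEP}, to replace $\m{B}$ by $\m{B}/\Theta_k$ so that $\m{A}_k$ re-embeds, and then the maximal-quotient device, whose correctness hinges on $\m{A}_k$ being finitely subdirectly irreducible. I would also spell out the degenerate cases ($\m{C}$ or some $\m{A}_k$ trivial, or $K=\emptyset$), where the conventions on trivial algebras and empty products make the construction go through verbatim.
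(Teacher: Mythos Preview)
The paper does not supply a proof of this theorem; it is stated with a citation to~\cite{FM24} and used as a black box. There is therefore no in-paper argument to compare against.

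Your proof is correct and is essentially the standard one found in the cited source and its antecedents (notably~\cite{MMT14}). The maximal-congruence device you isolate --- choosing $\Phi$ maximal with $\Phi\cap S^2=\De_S$ and deducing that $\m{E}/\Phi$ is finitely subdirectly irreducible from meet-irreducibility of $\De_S$ in $\Con\m{S}$ --- is exactly the lemma that drives those arguments, and your coordinatewise reduction in the converse (subdirectly decompose $\m{C}$, use the \prp{CEP} to push $\ker p_k$ up to $\m{B}$, then apply the device to land the $\m{B}$-vertex in $\Vfsi$) is the expected route. Your observation that the forward direction needs neither hypothesis is also accurate.
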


\noindent
This transfer theorem, which extends results in~\cite{GL71,MMT14} relating amalgamation in $\V$ to amalgamation in $\Vsi$, is particularly useful when investigating amalgamation in a variety $\V$ where the members of $\Vfsi$ have some simple structural features.

\begin{example}
Recall from Example~\ref{e:CEPforBDL} that $\BDLat$ has the \prp{CEP} and that $\BDLat_{_\textup{FSI}}$ contains,  up to isomorphism, only the trivial and two-element bounded lattices. Clearly, $\BDLat_{_\textup{FSI}}$ has the \prp{1AP}, so $\BDLat$ has the \prp{AP}. Indeed, $\BLat$ and $\BDLat$ are the only non-trivial varieties of bounded lattices that have the \prp{AP}~\cite{DJ84}. On the other hand,  $\BDLat$, unlike $\BLat$, does not have the strong \prp{AP}. For  a counterexample, consider the sublattices $\m{A},\m{B},\m{C}\in\BDLat$ of $\m{M}_5$ (see Example~\ref{e:latticesnocep}) with $A=\set{\bot,a,\top}$, $B=\set{\bot,a,b,\top}$, and $C=\set{\bot,a,c,\top}$, and suppose that the doubly injective span $\tuple{\m{A},\m{B},\m{C},\f_B,\f_C}$ in $\BDLat$, with inclusion maps $\f_B$, $\f_C$, has an amalgam $\tuple{\m{D},\ps_B,\ps_C}$ in $\BDLat$. Then $\ps_B(b)$ and $\ps_C(c)$ are both complements of $\ps_B(a)=\ps_C(a)$ in $\m{D}$ and, by the uniqueness of complements in bounded distributive lattices, $\ps_B(b)=\ps_C(c)$. So $\ps_B[A] \neq \ps_B[B]\cap\ps_C[C]\neq\ps_C[A]$. \lipicsEnd
\end{example}

\begin{example}
A variety $\V$ of Heyting algebras generated by a finite totally ordered Heyting algebra $\m{H}$ has the \prp{AP} if, and only if, $n\coloneqq\lvert H\rvert\le 3$. Using J{\'o}nsson's Lemma for congruence-distributive varieties~\cite{Jon67}, the class $\Vfsi$ contains exactly $n$ algebras up to isomorphism. For $n \le 3$, it is clear that $\Vfsi$ has the  \prp{1AP} and hence $\V$ has the \prp{AP}. For $n>3$, we obtain a counterexample by considering $\m{A},\m{B},\m{C}\in\V$ such that $A =\set{\bot,a_1,\ldots,a_{n-3},\top}$, $B =\set{\bot,b,a_1,\ldots,a_{n-3},\top}$, and $C =\set{\bot,a_1,\ldots,a_{n-3},c,\top}$ with $\bot<b<a_1<\cdots<a_{n-3}<\top$ and $\bot<a_1<\cdots<a_{n-3}<c<\top$. If the doubly injective span $\tuple{\m{A},\m{B},\m{C},\f_B,\f_C}$, with inclusion maps $\f_B$, $\f_C$, were to have an amalgam $\tuple{\m{D},\ps_B,\ps_C}$ in $\V$, then $\V$ would contain the $n+1$-element totally ordered Heyting algebra generated by $\ps_B[B]\cup\ps_C[C]$, contradicting the fact that $\V\models\top\eq x_1\jn(x_1\ra x_2)\jn(x_2\ra x_3)\jn\cdots\jn(x_{n-1}\ra x_n)$.  \lipicsEnd
\end{example}

\begin{example}
An FL-algebra is said to be {\em semilinear} if it is isomorphic to a subdirect product of totally ordered FL-algebras. Such algebras provide algebraic semantics for broad families of many-valued logics (see,~e.g.,~\cite{MPT23}). In particular, {\em BL-algebras} --- algebraic semantics for H{\'a}jek's basic fuzzy logic $\lgc{BL}$ --- are term-equivalent to semilinear FL$_\text{e}$-algebras satisfying $\zr\le x$ and $x(x\ra y)\eq x\mt y$, while G{\"o}del algebras and MV-algebras are term-equivalent to BL-algebras satisfying $xx \eq x$ and $(x\to\zr)\to\zr \eq x$, respectively. 

If $\V$ is a variety of semilinear FL-algebras, then $\Vfsi$ consists of its totally ordered members, and if $\V$ also has the \prp{CEP}, then it has the \prp{AP} if, and only if, $\Vfsi$ has the \prp{1AP}. This correspondence has been used to establish the \prp{AP} or its failure for a wide range of varieties of semilinear FL-algebras (see, e.g.,~\cite{Mar12,MM12,MMT14,GJM20,FMS25,FG25,FS25a,FS25b,GU25}). For example, continuum-many varieties of semilinear FL-algebras satisfying $xx\eq x$ have the \prp{AP}, but only finitely many of these satisfy $xy\eq yx$~\cite{FMS25}. A full description of the varieties of BL-algebras that have the \prp{AP} has been given in~\cite{FS25a}; these include all varieties (and no more) of MV-algebras generated by a single totally-ordered algebra~\cite{DL00}, exactly three non-trivial varieties of G{\"o}del algebras (Boolean algebras, G{\"o}del algebras, and the variety generated by the three-element totally ordered Heyting algebra), 
and the variety of all BL-algebras~\cite{Mon06}. For further details, as well as proofs that the varieties of semilinear FL-algebras and semilinear FL$_\text{e}$-algebras do not have the \prp{AP}, we refer the reader to the survey article~\cite{FS25b}.  \lipicsEnd
\end{example}

\begin{remark}
Suppose that $\V$ is a finitely generated variety --- i.e., $\type$ is finite and $\V$ is generated as a variety by a given finite set of finite $\type$-algebras --- that is  congruence-distributive and such that $\Vfsi$ is closed under taking subalgebras. Then there are effective algorithms to decide if $\V$ has the \prp{CEP} or \prp{AP}. Using J{\'o}nsson's Lemma for congruence-distributive varieties~\cite{Jon67}, a finite set $\Vfsi^*\subseteq\Vfsi$ of finite algebras can be constructed such that each $\m{A}\in\Vfsi$ is isomorphic to some $\m{A}^*\in\Vfsi^*$. Hence, by Theorem~\ref{t:CEPvar}, it can be decided if $\V$ has the \prp{CEP} by checking if each member of $\Vfsi^*$ has the \prp{CEP}. Moreover, every finitely generated congruence-distributive variety that has the \prp{AP}  has the \prp{CEP}~\cite[Corollary~2.11]{Kea89}. Hence, by Theorem~\ref{t:APvar}, it can also be decided if $\V$ has the \prp{AP} by checking if $\Vfsi^*$ has the \prp{1AP}. \lipicsEnd
\end{remark}


\section{Deductive Interpolation Properties}\label{s:interpolation}

In this section, we consider a range of (deductive) interpolation properties that may or may not be possessed by a variety $\V$. These properties are all expressed in terms of equational consequence, or, equivalently,  via congruences of the free algebras of $\V$, and, like the Robinson property considered in Section~\ref{s:amalgamation}, transfer to various amalgamation properties of $\V$.

\begin{property}
A variety $\V$ has the {\em deductive interpolation property} (\prp{DIP}) if for any $\Si\subseteq\Eqc(\xbar,\ybar)$ and $\eps\in\Eqc(\ybar,\zbar)$ satisfying $\Si\mdl{\V}\eps$, there exists a $\Pi\subseteq\Eqc(\ybar)$ satisfying $\Si\mdl{\V}\Pi$ and $\Pi\mdl{\V}\eps$. 
\end{property}

\noindent
Note that, by the finitarity of equational consequence in a variety $\V$, we may assume that the `interpolant' $\Pi$ in this definition is finite. Observe also that $\V$ has the \prp{DIP} if, and only if, for any  $\Si\subseteq\Eqc(\xbar,\ybar)$, there exists a $\Ga\subseteq\Eqc(\ybar)$ (which cannot be assumed to be finite) satisfying $\Si\mdl{\V}\eps\iff\Ga\mdl{\V}\eps$ for any $\eps\in\Eqc(\ybar,\zbar)$. The right-to-left direction is immediate. For the left-to-right direction, given $\Si\subseteq\Eqc(\xbar,\ybar)$, define $\Ga\coloneqq\set{\de\in\Eqc(\ybar)\mid\Si\mdl{\V}\de}$ and consider any $\eps\in\Eqc(\ybar,\zbar)$. If $\Ga\mdl{\V}\eps$, then clearly $\Si\mdl{\V}\eps$. Conversely, if $\Si\mdl{\V}\eps$, then, by the \prp{DIP}, there exists a $\Pi\subseteq\Eqc(\ybar)$ satisfying $\Si\mdl{\V}\Pi$ and $\Pi\mdl{\V}\eps$. By definition, $\Pi\subseteq\Ga$, so  $\Ga\mdl{\V}\eps$.

The \prp{DIP} can also be formulated as a property of embeddings between congruence lattices of free algebras. Observe first that the inclusion map $\tau\colon\F(\ybar)\ra\F(\xbar,\ybar)$ `lifts' to the maps
\[
\begin{array}{rl}
\tau^*\colon\Con\F(\ybar)\ra\Con\F(\xbar,\ybar); &\The\mapsto\cg{\F(\xbar,\ybar)}(\tau[\The])\\[.05in]
\iv{\tau}\colon\Con\F(\xbar,\ybar)\ra\Con\F(\ybar); &\Psi\mapsto\iv{\tau}[\Psi] =\Psi\cap\textup{F}(\ybar)^2,
\end{array}
\]
yielding an {\em adjunction} $\tuple{\tau^*,\iv{\tau}}$; that is, for any $\The\in\Con\F(\ybar)$ and $\Psi\in\Con\F(\xbar,\ybar)$,
\[
\tau^*(\The)\subseteq\Psi\enspace\Longleftrightarrow\enspace\The\subseteq\iv{\tau}(\Psi).
\]
It follows, using  Lemma~\ref{l:bridge}, that $\V$  has the \prp{DIP} if, and only if, for any sets $\xbar,\ybar,\zbar$ with inclusion maps $\tau_1,\tau_2,\tau_3,\tau_4$ between free algebras of $\V$, the following diagram commutes:
\[
\begin{tikzpicture}[baseline=(current  bounding  box.center)]
 \matrix (m) [matrix of math nodes, row sep=1.5em, column sep=1.5em,ampersand replacement=\&]{
\Con \F(\xbar,\ybar) 		\& 		\& \Con  \F(\ybar) 		\\
 		\&		\& 		\\
\Con \F(\xbar,\ybar,\zbar) 	\&     		\& \Con \F(\ybar,\zbar) 	\\
};
\path
(m-1-1) edge[->] node[above] {$\iv{\tau}_1$} (m-1-3)
(m-1-1) edge[->] node[left] {$\tau_2^*$} (m-3-1)
(m-3-1) edge[->] node[below] {$\iv{\tau}_3$} (m-3-3)
(m-1-3) edge[->] node[right] {$\tau_4^*$} (m-3-3);
\end{tikzpicture}
\]
That is, the \prp{DIP} for $\V$ is equivalent to the `Beck-Chevalley-like' condition $\tau^*_4\iv{\tau}_1=\iv{\tau}_3\tau^*_2$ for appropriate inclusion maps $\tau_1,\tau_2,\tau_3,\tau_4$  between free algebras of $\V$. For the relationship of this condition to a version of the interpolation property formulated in categorical logic, we refer the reader to~\cite{Pit83,Pav91,GZ02}.

We now establish a bridge theorem relating the \prp{AP} and \prp{DIP}, leaning heavily on the correspondence between the \prp{AP} and \prp{RP} provided by Theorem~\ref{t:AP<=>RP}.

\begin{theorem}\label{t:apdip}
Let $\V$ be any variety.
\begin{enumerate}[(a)]
\item	If $\V$ has the amalgamation property, then it has the deductive interpolation property.
\item If $\V$ has the deductive interpolation property and the extension property, then it has the amalgamation property.
\end{enumerate}
\end{theorem}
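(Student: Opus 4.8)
The plan is to derive both implications from Theorem~\ref{t:AP<=>RP}, which identifies the \prp{AP} for a variety with the \prp{RP}: for (a) I would show that the \prp{RP} entails the \prp{DIP}, and for (b) that the \prp{DIP} together with the \prp{EP} entails the \prp{RP}. Everything can then be carried out at the level of the abstract consequence relation $\mdl{\V}$, using only reflexivity, monotonicity, transitivity, and the defining clauses of the three properties, with no further appeal to free algebras or congruences beyond Theorem~\ref{t:AP<=>RP} itself.

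For part (a), assuming $\V$ has the \prp{AP} and hence the \prp{RP}, and given $\Si\subseteq\Eqc(\xbar,\ybar)$ and $\eps\in\Eqc(\ybar,\zbar)$ with $\Si\mdl{\V}\eps$, I would take as candidate interpolant $\Pi\coloneqq\set{\de\in\Eqc(\ybar)\mid\Si\mdl{\V}\de}$. One checks readily that $\Si\mdl{\V}\Pi$ and that $\Si\mdl{\V}\de$ iff $\Pi\mdl{\V}\de$ for every $\de\in\Eqc(\ybar)$ (left to right by reflexivity, since then $\de\in\Pi$; right to left by transitivity through $\Si\mdl{\V}\Pi$), so the hypothesis of the \prp{RP} applies to $\Si,\Pi$; as $\Si\mdl{\V}\eps$ gives $\Si\cup\Pi\mdl{\V}\eps$ by monotonicity, the \prp{RP} then yields $\Pi\mdl{\V}\eps$, and $\Pi$ may be taken finite by finitarity of $\mdl{\V}$.

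For part (b), assuming the \prp{DIP} and the \prp{EP}, by Theorem~\ref{t:AP<=>RP} it is enough to verify the \prp{RP}. Given $\Si\subseteq\Eqc(\xbar,\ybar)$ and $\Pi\subseteq\Eqc(\ybar,\zbar)$ with $\Si\mdl{\V}\de\iff\Pi\mdl{\V}\de$ for all $\de\in\Eqc(\ybar)$, and $\eps\in\Eqc(\ybar,\zbar)$ with $\Si\cup\Pi\mdl{\V}\eps$, I would proceed in three steps; the reverse inclusion $\Pi\mdl{\V}\eps\Rightarrow\Si\cup\Pi\mdl{\V}\eps$ is just monotonicity. First, viewing $\ybar\cup\zbar$ as ``retained'' variables and $\xbar$ as those to be eliminated, apply the \prp{EP} to get $\De\subseteq\Eqc(\ybar,\zbar)$ with $\Si\mdl{\V}\De$ and $\De\cup\Pi\mdl{\V}\eps$. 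Second, for each $\de\in\De$ apply the \prp{DIP} to $\Si\mdl{\V}\de$ --- legitimate since $\Si\subseteq\Eqc(\xbar,\ybar)$ and $\de\in\Eqc(\ybar,\zbar)$ --- to obtain $\Ga_\de\subseteq\Eqc(\ybar)$ with $\Si\mdl{\V}\Ga_\de$ and $\Ga_\de\mdl{\V}\de$; since $\Ga_\de\subseteq\Eqc(\ybar)$, the hypothesis on $\Si,\Pi$ upgrades $\Si\mdl{\V}\Ga_\de$ to $\Pi\mdl{\V}\Ga_\de$, and transitivity gives $\Pi\mdl{\V}\de$, hence $\Pi\mdl{\V}\De$. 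Third, transitivity with $\De\cup\Pi\mdl{\V}\eps$ yields $\Pi\mdl{\V}\eps$.

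Granting Theorem~\ref{t:AP<=>RP}, I do not expect a deep obstacle; the part that will need the most care is the bookkeeping of variable sets in (b). Specifically, the \prp{EP} must be applied with the retained set $\ybar\cup\zbar$ rather than $\ybar$ so that it strips $\xbar$ from the full consequence, and only afterwards may the \prp{DIP} be applied to each $\Si\mdl{\V}\de$ to strip $\zbar$ and produce an interpolant $\Ga_\de$ over $\ybar$ alone, which is exactly the shape needed to invoke the \prp{RP} hypothesis. I would also want to double-check, using the well-definedness of $\mdl{\V}$ recorded earlier, that enlarging or shrinking the ambient variable set in these steps does not alter the consequences in play.
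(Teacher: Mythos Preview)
Your proposal is correct and follows essentially the same route as the paper: both parts go through Theorem~\ref{t:AP<=>RP}, part~(a) uses the canonical interpolant $\Pi=\set{\de\in\Eqc(\ybar)\mid\Si\mdl{\V}\de}$, and part~(b) first applies the \prp{EP} with retained variables $\ybar,\zbar$ and then the \prp{DIP} to reduce to $\ybar$. The only cosmetic difference is that in~(b) the paper invokes the reformulation of the \prp{DIP} (noted just after its definition) to produce a single $\Ga\subseteq\Eqc(\ybar)$ with $\Si\mdl{\V}\de\iff\Ga\mdl{\V}\de$ for all $\de\in\Eqc(\ybar,\zbar)$, whereas you apply the \prp{DIP} pointwise to each $\de\in\De$; the two arguments are interchangeable.
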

\begin{proof}
(a) Suppose that $\V$ has the \prp{AP} and hence, by Theorem~\ref{t:AP<=>RP}, the \prp{RP}. Consider any $\Si\subseteq\Eqc(\xbar,\ybar)$ and $\eps\in\Eqc(\ybar,\zbar)$ such that $\Si\mdl{\V}\eps$, and define $\Pi\coloneqq\set{\de\in\Eqc(\ybar)\mid\Si\mdl{\V}\de}$. Clearly, $\Si\mdl{\V}\Pi$ and, since  $\Si\cup\Pi\mdl{\V}\eps$,  the \prp{RP} yields $\Pi\mdl{\V}\eps$.

(b) Suppose that $\V$ has the \prp{DIP} and the \prp{EP}. By Theorem~\ref{t:AP<=>RP}, it suffices to show that $\V$ has the \prp{RP}, so consider any $\Si\subseteq\Eqc(\xbar,\ybar)$ and $\Pi\subseteq\Eqc(\ybar,\zbar)$ satisfying $\Si\mdl{\V}\de\iff\Pi\mdl{\V}\de$ for all $\de\in\Eqc(\ybar)$, and any $\eps\in\Eqc(\ybar,\zbar)$ such that  $\Si\cup\Pi\mdl{\V}\eps$. By the \prp{EP}, there exists a $\De\subseteq\Eqc(\ybar,\zbar)$ such that $\Si\mdl{\V}\De$ and $\De\cup\Pi\mdl{\V}\eps$. By the \prp{DIP}, there exists a $\Ga\subseteq\Eqc(\ybar)$ satisfying $\Si\mdl{\V}\de\iff\Ga\mdl{\V}\de$ for all $\de\in\Eqc(\ybar,\zbar)$. In particular, $\Si\der{\V}\Ga$, so, by assumption, $\Pi\der{\V}\Ga$. But also $\Ga\der{\V}\De$, so $\Pi\der{\V}\De$. Hence $\Pi\mdl{\V}\eps$ as required.
\end{proof}

\begin{example}\label{e:FL-algebrasAP}
A variety of FL-algebras that has the \prp{CEP} (e.g., any variety of FL$_\text{e}$-algebras) has the \prp{AP} if, and only if, it has the \prp{DIP}, by Theorem~\ref{t:apdip}. This bridge has been traversed successfully in both directions. For example, it was proved in~\cite{FS24} that continuum-many varieties of FL$_\text{e}$-algebras have the \prp{AP} and hence that continuum-many axiomatic extensions of $\lgc{FL_e}$ have the \prp{DIP}. The \prp{AP} was established for the varieties of Abelian $\ell$-groups in~\cite{Pie72} and MV-algebras in~\cite{Mun88}, yielding the \prp{DIP} for Abelian logic $\lgc{A}$ and \L ukasiewicz logic $\mathrmL$, respectively, but in~\cite{MMT14} the  \prp{DIP} was proved directly, yielding alternative proofs of the \prp{AP} for these varieties. Note also that the failure of the \prp{AP} for the variety of $\ell$-groups, established in~\cite{Pie72}, has been used to prove that many other varieties of FL-algebras lack this property~\cite{GLT15,MPT23}. \lipicsEnd
\end{example}

The conjunction of the \prp{DIP} and \prp{EP} for a variety $\V$ yields a strictly stronger property. 

\begin{property}
A variety $\V$ has the {\em Maehara interpolation property} ($\prp{MIP}$) if for any $\Si\subseteq\Eqc(\xbar,\ybar)$ and $\Pi\cup\set{\eps}\subseteq\Eqc(\ybar,\zbar)$ satisfying  $\Si\cup\Pi\mdl{\V}\eps$, there exists a $\De\subseteq\Eqc(\ybar)$ satisfying $\Si\mdl{\V}\De$ and $\De\cup\Pi\mdl{\V}\eps$. 
\end{property}

The algebraic analogue of the  \prp{MIP} is a well-known categorical property.

\begin{property}
A class $\K$ of $\type$-algebras has the {\em transferable injections property} (\prp{TIP}) if for any injective span $\tuple{\m{A},\m{B},\m{C},\f_B,\f_C}$ in $\K$, there exist a $\m{D}\in \K$, a homomorphism $\ps_B\colon\m{B}\ra\m{D}$, and an embedding $\ps_C\colon\m{C}\ra\m{D}$ such that $\ps_B \f_B=\ps_C\f_C$ (see Figure~\ref{f:properties}(iv)).
\end{property}

The equivalence of the \prp{MIP} and \prp{TIP} for varieties was proved in~\cite{Ban69} (see also \cite{Wro84a}), and their equivalence to the conjunctions of the \prp{DIP} and \prp{CEP}, and the \prp{RP} and \prp{EP}, were established in~\cite{Cze85} and~\cite{Ono86}, respectively.

\begin{theorem}
The following statements are equivalent for any variety $\V$:
\begin{enumerate}[(1)]
\item$\V$ has the Maehara interpolation property.
\item$\V$ has the deductive interpolation property and the extension property.
\item $\V$ has the amalgamation property and the congruence extension property.
\item$\V$ has the transferable injections property.
\end{enumerate}
\end{theorem}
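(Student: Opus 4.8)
The plan is to prove the cycle of implications $(1)\Rightarrow(2)\Rightarrow(3)\Rightarrow(4)\Rightarrow(1)$, with almost all of the effort concentrated in the last step. For $(1)\Rightarrow(2)$: the \prp{MIP} specializes to the \prp{EP} on taking $\zbar=\emptyset$ (so that $\Eqc(\ybar,\zbar)=\Eqc(\ybar)$) and to the \prp{DIP} on taking $\Pi=\emptyset$, so a variety with the \prp{MIP} has both the \prp{DIP} and the \prp{EP}. For $(2)\Rightarrow(3)$: the \prp{EP} is equivalent to the \prp{CEP} by Theorem~\ref{t:cepiffep}, and a variety with the \prp{DIP} and the \prp{EP} has the \prp{AP} by Theorem~\ref{t:apdip}(b).

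For $(3)\Rightarrow(4)$, I start from an injective span $\tuple{\m{A},\m{B},\m{C},\f_B,\f_C}$ in $\V$, assuming without loss of generality that $\f_B$ is the inclusion of a subalgebra $\m{A}$ of $\m{B}$, and put $\The\coloneqq\ker(\f_C)\in\Con\m{A}$. The \prp{CEP} supplies a $\Phi\in\Con\m{B}$ with $\Phi\cap A^2=\The$, so by Theorem~\ref{t:general_homomorphism_theorem} the maps $a/\The\mapsto a/\Phi$ and $a/\The\mapsto\f_C(a)$ are embeddings $g\colon\m{A}/\The\ra\m{B}/\Phi$ and $h\colon\m{A}/\The\ra\m{C}$. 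Applying the \prp{AP} to the doubly injective span $\tuple{\m{A}/\The,\m{B}/\Phi,\m{C},g,h}$ (all three algebras lie in $\V$) yields a $\m{D}\in\V$ with embeddings $k\colon\m{B}/\Phi\ra\m{D}$ and $\ps_C\colon\m{C}\ra\m{D}$ such that $kg=\ps_C h$; then $\ps_B\coloneqq k\cmap{\Phi}\colon\m{B}\ra\m{D}$ is a homomorphism and $\ps_B\f_B(a)=k(a/\Phi)=k(g(a/\The))=\ps_C(h(a/\The))=\ps_C\f_C(a)$ for each $a\in A$, as required.

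The substantive step is $(4)\Rightarrow(1)$, which runs parallel to the right-to-left direction of Theorem~\ref{t:AP<=>RP}. Given $\Si\subseteq\Eqc(\xbar,\ybar)$ and $\Pi\cup\set{\eps}\subseteq\Eqc(\ybar,\zbar)$ with $\Si\cup\Pi\mdl{\V}\eps$, put $\De\coloneqq\set{\de\in\Eqc(\ybar)\mid\Si\mdl{\V}\de}$; then $\Si\mdl{\V}\De$ is immediate and it remains to prove $\De\cup\Pi\mdl{\V}\eps$. Set $\m{B}\coloneqq\F(\xbar,\ybar)/\cg{\F(\xbar,\ybar)}(\Si)$, $\m{A}\coloneqq\F(\ybar)/\cg{\F(\ybar)}(\De)$, and $\m{C}\coloneqq\F(\ybar,\zbar)/\cg{\F(\ybar,\zbar)}(\De\cup\Pi)$. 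Since $\cg{\F(\ybar)}(\De)=\cg{\F(\xbar,\ybar)}(\Si)\cap\Fc(\ybar)^2$ (which follows readily from Lemma~\ref{l:bridge}), the inclusions $\F(\ybar)\hookrightarrow\F(\xbar,\ybar)$ and $\F(\ybar)\hookrightarrow\F(\ybar,\zbar)$ followed by the canonical surjections have kernels $\cg{\F(\ybar)}(\De)$ and a congruence containing it, respectively, and hence factor through $\m{A}$ (Theorem~\ref{t:general_homomorphism_theorem}) as an embedding $\f_B\colon\m{A}\ra\m{B}$ and a homomorphism $\f_C\colon\m{A}\ra\m{C}$. The span $\tuple{\m{A},\m{B},\m{C},\f_B,\f_C}$ is injective, so the \prp{TIP} yields a $\m{D}\in\V$, a homomorphism $\ps_B\colon\m{B}\ra\m{D}$, and an embedding $\ps_C\colon\m{C}\ra\m{D}$ with $\ps_B\f_B=\ps_C\f_C$. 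Let $\ps\colon\F(\xbar,\ybar,\zbar)\ra\m{D}$ be the homomorphism sending each $x\in\xbar$ to the $\ps_B$-image of its class in $\m{B}$, each $z\in\zbar$ to the $\ps_C$-image of its class in $\m{C}$, and each $y\in\ybar$ to the $\ps_B$-image of its class in $\m{B}$, which by $\ps_B\f_B=\ps_C\f_C$ equals the $\ps_C$-image of its class in $\m{C}$. Then $\ps$ restricted to $\F(\xbar,\ybar)$ is $\ps_B$ composed with the canonical surjection onto $\m{B}$, so $\Si\subseteq\ker(\ps)$; and $\ps$ restricted to $\F(\ybar,\zbar)$ is $\ps_C$ composed with the canonical surjection onto $\m{C}$, whose kernel, as $\ps_C$ is injective, is $\cg{\F(\ybar,\zbar)}(\De\cup\Pi)$, so $\Pi\subseteq\ker(\ps)$. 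As $\m{D}\in\V$ and $\Si\cup\Pi\mdl{\V}\eps$, it follows that $\eps\in\ker(\ps)$; and since $\eps\in\Eqc(\ybar,\zbar)$, it lies in $\ker(\ps)\cap\Fc(\ybar,\zbar)^2=\cg{\F(\ybar,\zbar)}(\De\cup\Pi)$, that is, $\De\cup\Pi\mdl{\V}\eps$ by Lemma~\ref{l:bridge}.

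I expect the main obstacle to be setting up the span in $(4)\Rightarrow(1)$ correctly: the denominator of $\m{A}$ must be exactly $\cg{\F(\ybar)}(\De)$ for $\f_B$ to be an embedding, whereas $\f_C$ should be allowed to fail injectivity — it is precisely this failure that forces the use of the \prp{TIP} in place of the \prp{AP}. Once the diagram is in place, each verification is a routine computation with canonical maps, as in the proofs of Theorems~\ref{t:AP<=>RP} and~\ref{t:cepiffep}. An alternative is to first recast the \prp{MIP}, via Lemma~\ref{l:bridge}, as the identity $\bigl(\cg{\F(\xbar,\ybar,\zbar)}(\The)\jn\cg{\F(\xbar,\ybar,\zbar)}(\Psi)\bigr)\cap\Fc(\ybar,\zbar)^2=\cg{\F(\ybar,\zbar)}(\The\cap\Fc(\ybar)^2)\jn\Psi$ for all $\The\in\Con\F(\xbar,\ybar)$ and $\Psi\in\Con\F(\ybar,\zbar)$, and then show the \prp{TIP} is equivalent to the same identity, paralleling the recastings of the \prp{EP} and \prp{RP}; the two routes are interchangeable.
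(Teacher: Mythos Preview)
The paper does not actually prove this theorem: it states the result and attributes the equivalences to \cite{Ban69}, \cite{Wro84a}, \cite{Cze85}, and \cite{Ono86}, so there is no in-paper argument to compare against. Your cycle $(1)\Rightarrow(2)\Rightarrow(3)\Rightarrow(4)\Rightarrow(1)$ is correct and fits naturally into the paper's framework: $(1)\Rightarrow(2)$ is the obvious specialization, $(2)\Rightarrow(3)$ quotes Theorems~\ref{t:cepiffep} and~\ref{t:apdip}(b), $(3)\Rightarrow(4)$ is the standard factorization via the \prp{CEP} followed by the \prp{AP}, and your $(4)\Rightarrow(1)$ is a clean adaptation of the right-to-left direction of Theorem~\ref{t:AP<=>RP}, with the key observation that $\cg{\F(\ybar)}(\De)=\cg{\F(\xbar,\ybar)}(\Si)\cap\Fc(\ybar)^2$ (so $\f_B$ is an embedding) while $\f_C$ is merely a homomorphism, which is exactly where the \prp{TIP} rather than the \prp{AP} is needed. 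The alternative congruence-identity reformulation you mention in the last paragraph would also work and is in the same spirit as the recastings preceding Theorems~\ref{t:cepiffep} and~\ref{t:AP<=>RP}.
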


This leaves open the problem of describing an algebraic property that corresponds directly to the \prp{DIP}, to which (at least) two solutions may be found in the literature. A variety that admits free products has the \prp{DIP} if, and only if, it has the `flat amalgamation property'~\cite{Bac75}. More generally, a variety has the \prp{DIP} if, and only if, it has `diamond diagrams for embeddings'  (\prp{DDE}), studied in~\cite{KO10} as the ‘injective generalized amalgamation property’ for substructural logics and considered in a more general algebraic setting in~\cite{CG14}. 

\begin{example}
It was observed in~\cite{Ono86} that the variety of monoids has the flat amalgamation property, and it is easy to see that the same is true of the variety of commutative monoids, yielding examples of varieties that have the \prp{DIP} but not the \prp{AP} (see Example~\ref{e:monoidsAP}) or \prp{CEP}.  \lipicsEnd
\end{example}

\noindent
For the reader's convenience, some of the bridges between algebraic and syntactic properties presented (so far) in this chapter  are displayed in Figure~\ref{f:relationships}. 

\begin{figure}
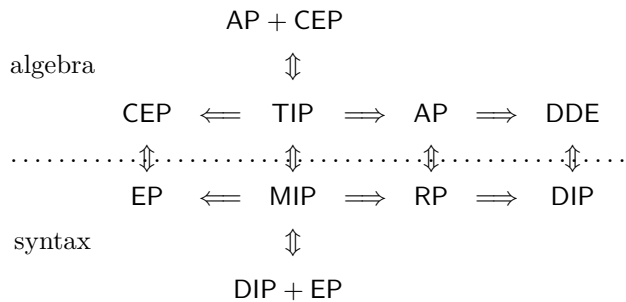

\begin{align*}
\begin{array}{ccccccccccc}
&&\multicolumn{3}{l}{\quad \prp{AP}+ \prp{CEP}}\\[.05in]
\text{algebra} && &\Updownarrow\\[.075in]
&\prp{CEP}  &\Longleftarrow & \prp{TIP}  &\Longrightarrow &  \prp{AP}  &\Longrightarrow &  \prp{DDE}\\[.05in]
 &\Updownarrow &  &\Updownarrow & &\Updownarrow &  &\Updownarrow &\\[-.2in]
 \hdotsfor{9}\\[.05in]
&\prp{EP}  &\Longleftarrow & \prp{MIP}  &\Longrightarrow &  \prp{RP}  &\Longrightarrow &  \prp{DIP}\\[.05in]
\text{syntax} && &\Updownarrow\\[.075in]
&& \multicolumn{3}{l}{\quad\; \prp{DIP}+ \prp{EP}}\\[.05in]
\end{array}
\end{align*}
\caption{Bridges between algebraic and syntactic properties}
\label{f:relationships}
\end{figure}

We conclude this section by considering a deductive version of uniform interpolation that is closely related to the \prp{DIP}, referring the reader to \refchapter{chapter:uniform} for a more nuanced account.

\begin{property}
A variety $\V$  has the {\em right uniform deductive interpolation property}  (\prp{RUDIP}) if for any finite $\Si\subseteq\Eqc(\xbar,\ybar)$, there exists a finite $\Ga\subseteq\Eqc(\ybar)$ satisfying $\Si\mdl{\V}\eps\iff\Ga\mdl{\V}\eps$ for any $\eps\in\Eqc(\ybar,\zbar)$. 
\end{property}

\noindent
Omitting the requirement that $\Ga$ be finite in this property yields the \prp{DIP}. Conversely, the \prp{RUDIP} may be viewed as the conjunction of the \prp{DIP} together with a further well-studied algebraic property. Recall that an algebra $\m{A}$ belonging to a variety $\V$ is {\em finitely presented in $\V$} if it is isomorphic to $\F(\xbar)/\The$ for some finite set $\xbar$ and finitely generated $\The\in\Con\F(\xbar)$. 

\begin{property}
A variety $\V$ is {\em coherent} if every finitely generated subalgebra of a finitely presented member of $\V$ is finitely presented.
\end{property}

\noindent
The notion of coherence originated in sheaf theory and has been studied broadly in the setting of groups, rings, modules, monoids, and other algebras (see, e.g.,~\cite{ES70,Gou92,KM18,KM19}). It has also been considered from a model-theoretic perspective by Wheeler~\cite{Whe76,Whe78}, who proved that the first-order theory of a variety has a model completion if, and only if, it is coherent and has the \prp{AP} and a further (rather complicated) property (see also~\cite{GZ02,vGMT17,MR23}). The connection to the \prp{RUDIP} is clarified by the following result.

\begin{theorem}[\cite{vGMT17,KM19}]
The following statements are equivalent for any variety $\V$:
\begin{enumerate}[(1)]
\item$\V$ is coherent.
\item For any finite sets $\xbar,\ybar$ and finitely generated congruence $\The$ of $\F(\xbar,\ybar)$, the congruence $\The\cap\Fc(\ybar)^2$ of $\F(\ybar)$ is finitely generated.
\item For any finite $\Si\subseteq\Eqc(\xbar,\ybar)$, there exists a finite $\Ga\subseteq\Eqc(\ybar)$ satisfying   $\Si\mdl{\V}\eps\iff\Ga\mdl{\V}\eps$ for any $\eps\in\Eqc(\ybar)$. 
\end{enumerate}
\end{theorem}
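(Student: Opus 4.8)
The plan is to obtain $(2)\Leftrightarrow(3)$ directly from Lemma~\ref{l:bridge}, and to prove $(1)\Leftrightarrow(2)$ by passing between finitely presented members of $\V$ and finitely generated congruences of its free algebras over finite sets of variables. For $(2)\Leftrightarrow(3)$: a finitely generated congruence of $\F(\xbar,\ybar)$ is exactly one of the form $\cg{\F(\xbar,\ybar)}(\Si)$ for a finite $\Si\subseteq\Eqc(\xbar,\ybar)$, and for any $\eps\in\Eqc(\ybar)$ Lemma~\ref{l:bridge} gives $\Si\mdl{\V}\eps\iff\eps\in\cg{\F(\xbar,\ybar)}(\Si)\cap\Fc(\ybar)^2$, and $\Ga\mdl{\V}\eps\iff\eps\in\cg{\F(\ybar)}(\Ga)$ for $\Ga\subseteq\Eqc(\ybar)$; hence $(3)$ asserts precisely that $\cg{\F(\xbar,\ybar)}(\Si)\cap\Fc(\ybar)^2$ is a finitely generated congruence of $\F(\ybar)$ for every finite $\Si$, which is $(2)$. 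The only subtlety here is that the finite $\Si$ in $(3)$ may sit inside an $\Eqc(\xbar,\ybar)$ with $\xbar,\ybar$ infinite; since $\type$ has a constant symbol, one checks that restricting to the finite sets of variables occurring in $\Si$ loses nothing, because congruence generation commutes with intersecting by $\Fc(\ybar)^2$ whenever the generating relation already lies in $\Fc(\ybar)^2$.

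For $(1)\Rightarrow(2)$, fix finite sets $\xbar,\ybar$ and a finitely generated $\The\in\Con\F(\xbar,\ybar)$. Then $\m{A}\coloneqq\F(\xbar,\ybar)/\The$ is finitely presented in $\V$, so coherence makes its finitely generated subalgebra $\m{C}$ --- generated by $\{\cmap{\The}(y)\mid y\in\ybar\}$ --- finitely presented too. Restricting the canonical map $\cmap{\The}$ to the subalgebra $\F(\ybar)$ of $\F(\xbar,\ybar)$ yields a surjective homomorphism onto $\m{C}$ with kernel $\The\cap\Fc(\ybar)^2$, so $\m{C}\cong\F(\ybar)/(\The\cap\Fc(\ybar)^2)$ by Theorem~\ref{t:general_homomorphism_theorem}. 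It then remains to invoke the standard fact that every presentation of a finitely presented algebra over a finite generating set has a finitely generated kernel; applied to $\The\cap\Fc(\ybar)^2$, this gives $(2)$.

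For $(2)\Rightarrow(1)$, let $\m{A}=\F(\xbar)/\The$ be finitely presented with $\xbar$ finite and $\The=\cg{\F(\xbar)}(\Si_0)$ for a finite $\Si_0$, and let $\m{C}$ be the subalgebra of $\m{A}$ generated by elements $c_i=\cmap{\The}(\ga_i)$, $1\le i\le k$, with $\ga_i\in\Fmc(\xbar)$. Choose fresh variables $\ybar=\{y_1,\dots,y_k\}$ and set $\Si\coloneqq\Si_0\cup\{y_i\eq\ga_i\mid 1\le i\le k\}$ and $\Psi\coloneqq\cg{\F(\xbar,\ybar)}(\Si)$. A direct verification --- exhibiting an inverse homomorphism $\m{A}\ra\F(\xbar,\ybar)/\Psi$ on generators --- shows that the homomorphism $\F(\xbar,\ybar)\ra\m{A}$ sending $x\mapsto\cmap{\The}(x)$ and $y_i\mapsto c_i$ has kernel exactly $\Psi$; thus $\F(\xbar,\ybar)/\Psi\cong\m{A}$, and under this isomorphism the image of $\F(\ybar)$ under $\cmap{\Psi}$ corresponds to $\m{C}$, so $\m{C}\cong\F(\ybar)/(\Psi\cap\Fc(\ybar)^2)$. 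Since $\Psi$ is finitely generated and $\ybar$ is finite, $(2)$ makes $\Psi\cap\Fc(\ybar)^2$ finitely generated, whence $\m{C}$ is finitely presented and $\V$ is coherent.

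The step I expect to be the main obstacle is the auxiliary lemma invoked in $(1)\Rightarrow(2)$: finite presentability is independent of the chosen finite generating set, so that if a finitely presented algebra equals $\F(\ybar)/\Lambda$ with $\ybar$ finite, then $\Lambda$ is finitely generated. Given a second presentation of the same algebra over a finite variable set $\zbar$ with finitely generated kernel, and an isomorphism between the two quotients, one lifts this isomorphism and its inverse to homomorphisms $s\colon\F(\ybar)\ra\F(\zbar)$ and $t\colon\F(\zbar)\ra\F(\ybar)$ commuting with the quotient maps, and shows that $\Lambda$ is generated by a finite $t$-image of a finite generating set of the other kernel together with the pairs $\tuple{y,t(s(y))}$ for $y\in\ybar$. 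The nontrivial point is that $\tuple{w,t(s(w))}$ lies in $\cg{\F(\ybar)}(\{\tuple{y,t(s(y))}\mid y\in\ybar\})$ for \emph{every} $w\in\Fmc(\ybar)$, which follows because the canonical map and its composite with $ts$ are two homomorphisms out of the free algebra $\F(\ybar)$ that agree on generators, hence agree everywhere. The remaining identifications --- Theorem~\ref{t:general_homomorphism_theorem} in both directions, $\F(\xbar,\ybar)/\Psi\cong\m{A}$, and the variable bookkeeping in $(2)\Leftrightarrow(3)$ --- are routine.
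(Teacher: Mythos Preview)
The paper does not supply a proof of this theorem; it is stated with citations to~\cite{vGMT17,KM19}. Your argument is correct and tracks the standard proof one finds in those sources: the translation $(2)\Leftrightarrow(3)$ via Lemma~\ref{l:bridge} is immediate, and the two nontrivial implications $(1)\Rightarrow(2)$ and $(2)\Rightarrow(1)$ are handled exactly as one expects --- the former by invoking invariance of finite presentability under change of finite generating set, the latter by the Tietze-style trick of adjoining fresh generators $y_i$ together with defining relations $y_i\eq\ga_i$ to obtain a finitely generated congruence $\Psi$ on $\F(\xbar,\ybar)$ whose restriction to $\F(\ybar)$ presents the given finitely generated subalgebra. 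Your sketch of the auxiliary lemma (that any presentation of a finitely presented algebra over a finite generating set has finitely generated kernel) is also correct.

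One small wrinkle: in your $(2)\Leftrightarrow(3)$ paragraph you note that $\xbar,\ybar$ in $(3)$ may be infinite while $(2)$ explicitly restricts to finite sets, and you dispatch this by appealing to the retract fact that $\cg{\F(\xbar,\ybar)}(R)\cap\Fc(\ybar)^2=\cg{\F(\ybar)}(R)$ whenever $R\subseteq\Fc(\ybar)^2$. That retract argument cleanly reduces $\xbar$ to the finitely many $\xbar$-variables occurring in $\Si$, but it does not by itself show that a $\Ga\subseteq\Eqc(\ybar_0)$ witnessing the equivalence over $\Eqc(\ybar_0)$ continues to witness it over $\Eqc(\ybar)$ for larger $\ybar$: that step needs the additional observation that $\F(\xbar_0,\ybar)/\cg{}(\Si)$ is the coproduct $(\F(\xbar_0,\ybar_0)/\cg{}(\Si))\ast\F(\ybar\setminus\ybar_0)$, together with a short argument that the $\ybar$-generated subalgebra computes correctly. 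This is routine, and in any case the natural reading of $(3)$ --- matching the explicit finiteness hypothesis in $(2)$ --- makes the point moot. It is worth tightening the sentence, but there is no real gap.
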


\begin{corollary}
A variety $\V$ has the right uniform deductive interpolation property if, and only if, it is coherent and has the deductive interpolation property.
\end{corollary}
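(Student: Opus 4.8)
The plan is to prove both implications by leaning on the two reformulations already at hand: the characterization of coherence given as condition~(3) of the preceding theorem, and the reformulation of the \prp{DIP} noted earlier --- namely, that $\V$ has the \prp{DIP} if, and only if, for every $\Si\subseteq\Eqc(\xbar,\ybar)$ there is some (not necessarily finite) $\Ga\subseteq\Eqc(\ybar)$ with $\Si\mdl{\V}\eps\iff\Ga\mdl{\V}\eps$ for all $\eps\in\Eqc(\ybar,\zbar)$. The whole argument then rests on the elementary observation that two subsets of $\Eqc(\ybar)$ having the same consequences among equations over $\ybar$ are mutually derivable, and hence, by transitivity of $\mdl{\V}$, have the same consequences among equations over any larger variable set. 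Throughout I would keep in mind that $\mdl{\V}$ does not depend on the ambient variable set, so all consequence assertions may be read as taking place over $\xbar,\ybar,\zbar$.

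For the forward direction, assume $\V$ has the \prp{RUDIP}. Coherence is immediate: restricting the defining condition of the \prp{RUDIP} to equations $\eps\in\Eqc(\ybar)$ is precisely condition~(3) of the coherence theorem. For the \prp{DIP} (in its reformulated version), given an arbitrary $\Si\subseteq\Eqc(\xbar,\ybar)$ I would set $\Ga_0\coloneqq\set{\de\in\Eqc(\ybar)\mid\Si\mdl{\V}\de}$, so that $\Si\mdl{\V}\Ga_0$ trivially. If $\Si\mdl{\V}\eps$ for some $\eps\in\Eqc(\ybar,\zbar)$, then finitarity of $\mdl{\V}$ yields a finite $\Si'\subseteq\Si$ with $\Si'\mdl{\V}\eps$; applying the \prp{RUDIP} to $\Si'$ produces a finite $\Ga'\subseteq\Eqc(\ybar)$ capturing exactly the consequences of $\Si'$, whence $\Ga'\mdl{\V}\eps$, and moreover $\Ga'\subseteq\Ga_0$ (since each $\gamma'\in\Ga'$ satisfies $\Ga'\mdl{\V}\gamma'$, hence $\Si'\mdl{\V}\gamma'$, hence $\Si\mdl{\V}\gamma'$). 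Monotonicity then gives $\Ga_0\mdl{\V}\eps$, establishing the \prp{DIP}.

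For the converse, assume $\V$ is coherent and has the \prp{DIP}, and let $\Si\subseteq\Eqc(\xbar,\ybar)$ be finite. Condition~(3) of the coherence theorem supplies a finite $\Ga\subseteq\Eqc(\ybar)$ with $\Si\mdl{\V}\eps\iff\Ga\mdl{\V}\eps$ for all $\eps\in\Eqc(\ybar)$, while the reformulated \prp{DIP} supplies some $\Ga'\subseteq\Eqc(\ybar)$ with $\Si\mdl{\V}\eps\iff\Ga'\mdl{\V}\eps$ for all $\eps\in\Eqc(\ybar,\zbar)$. Since $\Ga$ and $\Ga'$ have identical consequences among equations over $\ybar$ --- both coincide with those of $\Si$ --- we get $\Ga\mdl{\V}\Ga'$ and $\Ga'\mdl{\V}\Ga$ (using reflexivity on the members of the other set), and therefore, by transitivity, $\Ga\mdl{\V}\eps\iff\Ga'\mdl{\V}\eps$ for every $\eps\in\Eqc(\ybar,\zbar)$ as well. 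Chaining these equivalences shows that the finite set $\Ga$ witnesses the \prp{RUDIP} for $\Si$.

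There is no real obstacle here; the one step requiring a moment's care --- and the conceptual heart of the argument --- is this last upgrade in the converse direction, where the finite coherence-interpolant $\Ga$, which a priori only controls consequences over $\ybar$, is promoted to a genuine right uniform interpolant controlling consequences over $\ybar,\zbar$ by matching it against the (possibly infinite) \prp{DIP}-interpolant $\Ga'$.
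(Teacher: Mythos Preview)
Your argument is correct. The paper states this corollary without proof, treating it as an immediate consequence of the preceding theorem; your write-up supplies exactly the routine details one would expect --- the forward direction by specializing $\zbar=\emptyset$ for coherence and invoking finitarity for the \prp{DIP}, and the converse by bootstrapping the finite coherence-interpolant via interderivability with the \prp{DIP}-interpolant.
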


The \prp{RUDIP} and a left uniform deductive interpolation property were established for the variety of Heyting algebras in~\cite{Pit92} and subsequently for many other varieties serving as algebraic semantics for propositional logics (see~\refchapter{chapter:uniform} for further details). Note, however, that although the variety of modal algebras has an implication-based uniform interpolation property, it is not coherent and therefore lacks the deductive version. Indeed, as shown in~\cite{KM18,KM19} using a general criterion, broad families of varieties providing algebraic semantics for modal and substructural logics fail to be coherent, and hence do not have the \prp{RUDIP}.


\section{The Craig Interpolation Property}\label{s:craig}

In this final section, we turn our attention to Craig interpolation, providing an equational formulation of this property for varieties of algebras with a lattice reduct and exploring its relationship to the \prp{DIP} and various forms of amalgamation.

Let us assume that $\type$ is an algebraic language with binary operation symbols $\mt$ and $\jn$ and, as before, at least one constant symbol, and write $s \le t$ as shorthand for $s\mt t\eq s$. We call an $\type$-algebra $\m{A}$ {\em lattice-ordered} if its reduct $\tuple{A,\mt,\jn}$ is a lattice. 

\begin{property}
A variety $\V$ of lattice-ordered $\type$-algebras has the {\em Craig interpolation property} (\prp{CIP}) if for any $\a\in\Fmc(\xbar,\ybar)$ and $\b\in\Fmc(\ybar,\zbar)$ satisfying $\mdl{\V}\a\le\b$, there exists a $\ga\in\Fmc(\ybar)$ satisfying $\mdl{\V}\a\le\ga$ and $\mdl{\V}\ga\le\b$. 
\end{property}

\noindent
Translating between consequence in propositional logics and their algebraic semantics as described in Section~\ref{s:algebra_and_logic}, this equational formulation of the \prp{CIP} corresponds directly for superintuitionistic, modal, and substructural logics to familiar formulations of the \prp{CIP} using an `implication' connective $\ra$ or $\ld$ (see \refchapter{chapter:nonclassical}).

Relationships between the \prp{CIP} and \prp{DIP} and various amalgamation properties depend heavily on the existence of suitable local deduction theorems. It follows easily from the deduction theorem for superintuitionistic logics that the \prp{CIP} and \prp{DIP}, and hence also the~\prp{AP}, coincide for any variety of Heyting algebras. As remarked in the introduction, there are precisely eight such varieties, corresponding to eight superintuitionistic logics, that have these properties~\cite{Mak77}. In general, however, neither the \prp{DIP} nor the \prp{AP} implies the \prp{CIP}. For example, the variety of MV-algebras has the  \prp{DIP} and \prp{AP}, but lacks the \prp{CIP}; e.g., $x\mt\lnot x\le y\lor \lnot y$ is satisfied by all MV-algebras, but every variable-free formula  is equivalent to $\bot$ or $\top$ in this variety, and both $x\mt\lnot x\le\bot$ and  $\top\le y\lor \lnot y$ fail in totally ordered MV-algebras with at least three elements. The failure of the \prp{CIP} for many other varieties of semilinear FL-algebras can be established similarly~\cite{MM12}. In particular, only three varieties of BL-algebras have this property: Boolean algebras, G{\"o}del algebras, and the variety generated by the three-element totally ordered Heyting algebra~\cite{Mon06}.

The following two propositions identify varieties providing algebraic semantics for broad families of substructural logics and modal logics for which the \prp{CIP} implies the \prp{DIP} and \prp{AP}.

\begin{proposition}\label{p:CIPimpliesDIP}
If a variety of FL$_\text{e}$-algebras has the Craig interpolation property, then it has the deductive interpolation property and amalgamation property.
\end{proposition}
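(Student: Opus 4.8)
The plan is to reduce the statement to the single implication: the \prp{CIP} implies the \prp{DIP} for a variety $\V$ of FL$_\text{e}$-algebras. Granting this, the \prp{AP} comes for free, since every variety of FL$_\text{e}$-algebras has the \prp{CEP} (Example~\ref{ex:FLeEP}) and hence the \prp{EP} (Theorem~\ref{t:cepiffep}), so Theorem~\ref{t:apdip}(b), applied to the \prp{DIP} and the \prp{EP}, yields the \prp{AP}.

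So suppose $\V$ has the \prp{CIP}, let $\Si\subseteq\Eqc(\xbar,\ybar)$ and $\eps = (\a\eq\b)\in\Eqc(\ybar,\zbar)$ satisfy $\Si\mdl{\V}\eps$, and, by finitarity of equational consequence in a variety, assume $\Si = \set{\a_i\eq\b_i\mid i\le k}$ is finite. The key first step is to repackage this consequence as a single valid inequality with the variable partition demanded by the \prp{CIP}. By residuation, each $\a_i\eq\b_i$ is equivalent over $\V$ to $\e\le c_i$ with $c_i\coloneqq(\a_i\ra\b_i)\mt(\b_i\ra\a_i)\in\Fmc(\xbar,\ybar)$, and $\eps$ is equivalent over $\V$ to $\e\le d$ with $d\coloneqq(\a\ra\b)\mt(\b\ra\a)\in\Fmc(\ybar,\zbar)$. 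Putting $c\coloneqq c_1\mt\cdots\mt c_k\mt\e$, we have $\e\le c_i$ for all $i$ if and only if $\e\le c$, so $\Si\mdl{\V}\eps$ is equivalent to $\set{\e\le c}\mdl{\V}\e\le d$; and since $c\le\e$, the local deduction theorem for FL$_\text{e}$-algebras (Example~\ref{ex:FLeEP}, with empty side condition) turns this into $\mdl{\V}c^n\le d$ for some $n\in\N$. As $c^n\in\Fmc(\xbar,\ybar)$ and $d\in\Fmc(\ybar,\zbar)$, the \prp{CIP} now supplies a $\ga\in\Fmc(\ybar)$ with $\mdl{\V}c^n\le\ga$ and $\mdl{\V}\ga\le d$.

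It then remains to check that $\Pi\coloneqq\set{\e\mt\ga\eq\e}\subseteq\Eqc(\ybar)$ is the required interpolant. For $\Si\mdl{\V}\Pi$: for any homomorphism $\f$ from a formula algebra to some $\m{A}\in\V$ with $\Si\subseteq\ker(\f)$, we have $\f(\a_i)=\f(\b_i)$, hence $\f(c_i)=\f(\a_i)\ra\f(\a_i)\ge\e$ for each $i$, so $\f(c)=\e$, and therefore $\e=\f(c)^n\le\f(\ga)$ by $\mdl{\V}c^n\le\ga$; that is, $\f(\e\mt\ga)=\f(\e)$. For $\Pi\mdl{\V}\eps$: for any such $\f$ with $\f(\e)\le\f(\ga)$, we get $\f(\e)\le\f(\ga)\le\f(d)$ by $\mdl{\V}\ga\le d$, so $\f(\e)\le\f(\a)\ra\f(\b)$ and $\f(\e)\le\f(\b)\ra\f(\a)$, which by residuation and $\e\cdot\f(\a)=\f(\a)$, $\e\cdot\f(\b)=\f(\b)$ give $\f(\a)=\f(\b)$, i.e., $\eps\in\ker(\f)$.

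The one genuinely substantive step is the reduction in the second paragraph: contracting the finitely many hypotheses into the single inequality $\e\le c$ over $\xbar,\ybar$, and then using the deduction theorem to absorb it into the conclusion as $c^n\le d$, all the while keeping the antecedent over $\xbar,\ybar$ and the succedent over $\ybar,\zbar$. This is precisely what allows the variable partition of the original consequence to survive into a shape to which the \prp{CIP} applies, and this route --- rather than the implication itself --- is what is unavailable for varieties of FL-algebras, which admit no such local deduction theorem. The remaining work (the residuation reductions and the back-translation of $\ga$ into $\Pi$) is routine.
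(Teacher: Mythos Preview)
Your proof is correct and follows essentially the same route as the paper: reduce to a single premise inequality, apply the local deduction theorem to get a valid inequality with the right variable split, invoke the \prp{CIP}, and translate the interpolant back. The only difference is presentational: the paper compresses your second paragraph into a ``without loss of generality'' (starting directly from $\set{\e\le\a}\mdl{\V}\e\le\b$) and replaces your explicit verification of the interpolant with a second appeal to the local deduction theorem, while you spell out the finitarity-plus-residuation packaging and the semantic check of $\Pi$ by hand.
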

\begin{proof}
Let $\V$ be a variety of FL$_\text{e}$-algebras that has the \prp{CIP}. It suffices to show that $\V$ has the \prp{DIP} (see Example~\ref{e:FL-algebrasAP}), so consider without loss of generality any  $\a\in\Fmc(\xbar,\ybar)$ and $\b\in\Fmc(\ybar,\zbar)$ satisfying $\set{\e\le\a}\mdl{\V}\e\le\b$. By the local deduction theorem  for varieties of FL$_\text{e}$-algebras (see Example~\ref{ex:FLeEP}), $\mdl{\V} (\a\mt\e)^n\le\b$, for some $n\in\N$. Hence, by the \prp{CIP}, there exists  a $\ga\in\Fmc(\ybar)$ satisfying  $\mdl{\V} (\a\mt\e)^n \le \ga$ and $\mdl{\V}\ga \le \b$. Using the local deduction theorem again twice, $\set{\e\le\a}\mdl{\V}\e\le\ga$ and $\set{\e\le\ga}\mdl{\V}\e\le b$. So $\V$ has the \prp{DIP}. 
\end{proof}

\begin{proposition}
If a variety of modal algebras has the Craig interpolation property, then it has the deductive interpolation property and amalgamation property.
\end{proposition}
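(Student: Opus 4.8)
The plan is to follow the proof of Proposition~\ref{p:CIPimpliesDIP} closely, substituting the local deduction theorem for normal modal logics recalled in Example~\ref{e:HeytingEP} for the one used there for $\lgc{FL_e}$. Let $\V$ be a variety of modal algebras with the \prp{CIP}. Since every variety of modal algebras has the \prp{EP} (Example~\ref{e:HeytingEP}), Theorem~\ref{t:apdip}(b) reduces the proposition to showing that $\V$ has the \prp{DIP}, from which the \prp{AP} then follows.

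To set up the \prp{DIP}, I would first use the Boolean reduct to compress equations. Writing $\a\ra\b\coloneqq\lnot\a\jn\b$, every equation $\a\eq\b$ is equivalent in $\V$ to $(\a\ra\b)\mt(\b\ra\a)\eq\top$, and every finite set $\set{\a_1\eq\top,\dots,\a_k\eq\top}$ is equivalent to the single equation $\a_1\mt\cdots\mt\a_k\eq\top$; neither reduction changes the ambient variables. Together with the finitarity of equational consequence in a variety, this means it suffices to produce, for arbitrary $\a\in\Fmc(\xbar,\ybar)$ and $\b\in\Fmc(\ybar,\zbar)$ with $\set{\a\eq\top}\mdl{\V}\b\eq\top$, some $\ga\in\Fmc(\ybar)$ satisfying $\set{\a\eq\top}\mdl{\V}\ga\eq\top$ and $\set{\ga\eq\top}\mdl{\V}\b\eq\top$.

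For the core step, I would read $\set{\a\eq\top}\mdl{\V}\b\eq\top$, via the standard transformers, as the consequence $\a\der{\lgc{L}}\b$ in the normal modal logic $\lgc{L}$ corresponding to $\V$. The local deduction theorem then supplies an $n\in\N$ with $\mdl{\V}\a'\le\b$, where $\a'\coloneqq\a\mt\Box\a\mt\cdots\mt\Box^n\a\in\Fmc(\xbar,\ybar)$. Applying the \prp{CIP} to $\a'\le\b$ yields a $\ga\in\Fmc(\ybar)$ with $\mdl{\V}\a'\le\ga$ and $\mdl{\V}\ga\le\b$. From $\mdl{\V}\a'\le\ga$, that is $\mdl{\V}(\a\mt\Box\a\mt\cdots\mt\Box^n\a)\ra\ga\eq\top$, running the local deduction theorem backwards gives $\set{\a\eq\top}\mdl{\V}\ga\eq\top$, while $\mdl{\V}\ga\le\b$ gives $\set{\ga\eq\top}\mdl{\V}\b\eq\top$ directly. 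Hence $\set{\ga\eq\top}$ is the required interpolant and $\V$ has the \prp{DIP}.

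The points I expect to need care are bookkeeping rather than mathematics. First, one must check that the modal closure $\a'=\a\mt\Box\a\mt\cdots\mt\Box^n\a$ introduces no new variables, so that $\a'$ really lies in $\Fmc(\xbar,\ybar)$ and the interpolant furnished by the \prp{CIP} really lies in $\Fmc(\ybar)$. Second, one should make explicit that the translation between equational consequence in $\V$ and the relation $\der{\lgc{L}}$ --- hence the equational reading of the local deduction theorem --- is legitimate for every variety of modal algebras, since these correspond bijectively to normal modal logics. The one structural difference from the $\lgc{FL_e}$ case is that the iteration of $\Box$ is genuinely unbounded --- it collapses to a single conjunct only over axiomatic extensions of $\lgc{K4}$ --- but this is harmless, as the \prp{CIP} is applied only once the finite modal closure $\a'$ has been formed.
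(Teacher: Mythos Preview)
Your proposal is correct and takes essentially the same approach as the paper, whose proof reads in its entirety ``Analogous to the proof of Proposition~\ref{p:CIPimpliesDIP}.'' You have faithfully unpacked that analogy, replacing the $\lgc{FL_e}$ local deduction theorem with the modal one from Example~\ref{e:HeytingEP} and adjusting the normal form of equations from $\e\le\a$ to $\a\eq\top$ as befits the Boolean setting.
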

\begin{proof}
Analogous to the proof of Proposition~\ref{p:CIPimpliesDIP}.
\end{proof}

Establishing the \prp{CIP} --- e.g., using the proof-theoretic methods explained in \refchapter{chapter:prooftheory} --- is therefore one way of proving that a variety of FL$_\text{e}$-algebras or modal algebras has the \prp{AP}. Indeed, the only known proof that the variety of FL$_\text{e}$-algebras has the \prp{AP} takes this approach (see~\cite{MPT23} or~\refchapter{chapter:prooftheory}). Conversely, to prove the \prp{CIP} for varieties of FL$_\text{e}$-algebras or modal algebras using algebraic methods, we require a strengthening of the \prp{AP}.

\begin{property}
A class $\K$ of lattice-ordered $\type$-algebras has the {\em superamalgamation property} (super \prp{AP}) if every doubly injective span $\tuple{\m{A},\m{B}_1,\m{B}_2,\f_1,\f_2}$ in $\K$ has an amalgam $\tuple{\m{C},\ps_1,\ps_2}$ in $\K$ satisfying  for any $b_1\in B_1$, $b_2\in B_2$ and distinct $i,j\in\set{1,2}$,
\begin{align*}
\ps_i(b_i)\le \ps_j(b_j) \enspace\Longrightarrow\enspace \ps_i(b_i)\le \ps_i \f_i(a) = \ps_j\f_j(a)  \le\ps_j(b_j)\,\text{ for some }a\in A.
\end{align*}
\end{property}

\noindent
It is not hard to see that if a variety of lattice-ordered $\type$-algebras has the super \prp{AP}, then it must also have the strong \prp{AP}. 

Bridge theorems between the \prp{CIP} and super \prp{AP} for varieties of modal algebras and FL$_\text{e}$-algebras are established similarly to Theorem~\ref{t:apdip}  relating the \prp{DIP} and~\prp{AP}.

\begin{theorem}[\cite{Mak79}]
A variety of modal algebras has the Craig interpolation property if, and only if, it has the superamalgamation property.
\end{theorem}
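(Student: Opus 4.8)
The plan is to mimic the structure of the proof of Theorem~\ref{t:apdip} (and its refinement via the \prp{MIP}/\prp{TIP}), replacing the \prp{DIP} by the \prp{CIP} and the \prp{AP} by the super \prp{AP} throughout, and leaning on the local deduction theorem for normal modal logics recorded in Example~\ref{e:HeytingEP}. The key point that makes the \prp{CIP} rather than just the \prp{DIP} match the \emph{order}-theoretic strengthening in the super \prp{AP} is that modal algebras carry a Boolean (hence lattice) reduct and an implication $\to$ for which $\der{\lgc{L}}$ admits the local deduction theorem, so that an inequality $\ps_i(b_i)\le\ps_j(b_j)$ in an amalgam can be pulled back through the free-algebra presentation to a statement $\mdl{\V}\a\le\b$ with $\a$ in the variables of $\m{B}_1$ and $\b$ in the variables of $\m{B}_2$, to which the \prp{CIP} applies.

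For the \textbf{super \prp{AP} $\Rightarrow$ \prp{CIP}} direction, I would start from $\a\in\Fmc(\xbar,\ybar)$ and $\b\in\Fmc(\ybar,\zbar)$ with $\mdl{\V}\a\le\b$, set up the free algebras $\F(\xbar,\ybar)$, $\F(\ybar)$, $\F(\ybar,\zbar)$ and the doubly injective span they form (as in the left diagram of Figure~\ref{f:comdiag}), take a superamalgam $\tuple{\m{D},\ps_B,\ps_C}$ in $\V$, and let $\ps\colon\F(\xbar,\ybar,\zbar)\ra\m{D}$ be the induced homomorphism. Since $\mdl{\V}\a\le\b$, in particular $\ps([\a])\le\ps([\b])$, i.e.\ $\ps_B([\a]_{\m{B}})\le\ps_C([\b]_{\m{C}})$ where $\m{B}=\F(\xbar,\ybar)/\cdots$ and $\m{C}=\F(\ybar,\zbar)/\cdots$; the super \prp{AP} then supplies some $a\in A=\F(\ybar)/\cdots$ with $\ps_B([\a])\le\ps_B(a)=\ps_C(a)\le\ps_C([\b])$. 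Lifting $a$ to a formula $\ga\in\Fmc(\ybar)$ and using injectivity of $\ps_B$ and $\ps_C$ (together with Lemma~\ref{l:bridge} to translate $\le$-statements in free algebras back into $\mdl{\V}$-statements) yields $\mdl{\V}\a\le\ga$ and $\mdl{\V}\ga\le\b$, i.e.\ $\ga$ is the required interpolant.

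For the converse, \textbf{\prp{CIP} $\Rightarrow$ super \prp{AP}}: by Proposition~\ref{p:CIPimpliesDIP}'s modal analogue the \prp{CIP} already gives the \prp{AP}, so the only new content is the order condition. Given a doubly injective span $\tuple{\m{A},\m{B}_1,\m{B}_2,\f_1,\f_2}$ with (WLOG) inclusion maps, $B_1=\xbar,\ybar$, $B_2=\ybar,\zbar$, $A=\ybar$, realize $\m{B}_1,\m{B}_2,\m{A}$ as quotients of free algebras by kernels $\The,\Psi,\The_0$ and, exactly as in the right diagram of Figure~\ref{f:comdiag}, build the amalgam $\m{D}=\F(\xbar,\ybar,\zbar)/\Phi$ with $\Phi=\cg{}(\The\cup\Psi)$ and embeddings $\ps_1,\ps_2$. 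Now suppose $\ps_1(b_1)\le\ps_2(b_2)$ for $b_i\in B_i$: picking formula representatives $\a\in\Fmc(\xbar,\ybar)$ of $b_1$ and $\b\in\Fmc(\ybar,\zbar)$ of $b_2$, the inequality in $\m{D}$ means $\The\cup\Psi\mdl{\V}\a\le\b$, and the local deduction theorem for modal logics together with finitarity lets me absorb the finitely many generating (in)equations of $\The$ and $\Psi$ into $\a$ and $\b$ respectively, obtaining $\a'\in\Fmc(\xbar,\ybar)$, $\b'\in\Fmc(\ybar,\zbar)$ with $\mdl{\V}\a'\le\b'$ and with $[\a']=\ps_1(b_1)$, $[\b']=\ps_2(b_2)$ in $\m{D}$; the \prp{CIP} then gives $\ga\in\Fmc(\ybar)$ with $\mdl{\V}\a'\le\ga\le\b'$, and $a:=[\ga]_{\m{A}}\in A$ witnesses $\ps_1(b_1)\le\ps_1\f_1(a)=\ps_2\f_2(a)\le\ps_2(b_2)$. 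The symmetric case $i=2$, $j=1$ is identical.

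The \textbf{main obstacle} is the bookkeeping in the converse direction: correctly absorbing the generating congruences $\The$ and $\Psi$ — which are not principal in general — into the formulas $\a,\b$ using the modal local deduction theorem, so that the resulting $\a',\b'$ still have the prescribed images $\ps_1(b_1),\ps_2(b_2)$ in $\m{D}$ while separating the $\xbar$-variables cleanly onto the $\a'$ side and the $\zbar$-variables onto the $\b'$ side. This is where the boundedness of the number of $\Box$-iterations in the deduction theorem (and congruence-distributivity/finitarity) is used, and it is the step that genuinely requires the modal structure rather than working for arbitrary lattice-ordered varieties.
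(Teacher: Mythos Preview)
Your proposal is correct and matches what the paper indicates: the paper does not prove this theorem in detail, merely citing \cite{Mak79} and remarking that the bridge is ``established similarly to Theorem~\ref{t:apdip}''; your argument carries out precisely that analogy, using the modal local deduction theorem of Example~\ref{e:HeytingEP} in the role the FL$_\text{e}$ deduction theorem plays in Proposition~\ref{p:CIPimpliesDIP}. Two small clarifications: in the super \prp{AP} $\Rightarrow$ \prp{CIP} direction the quotients you write as ``$\cdots$'' should simply be trivial --- the relevant doubly injective span is $\F(\ybar)\hookrightarrow\F(\xbar,\ybar)$ and $\F(\ybar)\hookrightarrow\F(\ybar,\zbar)$ themselves --- and in the converse the obstacle you flag is indeed resolved by finitarity, since only finitely many pairs from $\The$ and $\Psi$ are needed for the single consequence $\The\cup\Psi\mdl{\V}\a\le\b$, and these may be absorbed (as equations $\chi_i\eq\top$, $\xi_j\eq\top$) via the local deduction theorem into $\a'\coloneqq\a\mt\bigwedge_i\Box^{\le n}\chi_i$ and $\b'\coloneqq(\bigwedge_j\Box^{\le n}\xi_j)\to\b$, which satisfy $[\a']_\Phi=[\a]_\Phi$ and $[\b']_\Phi=[\b]_\Phi$ exactly as you claim.
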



\begin{example}
\refchapter{chapter:modal} presents six quite different proofs that the variety of modal algebras --- the algebraic semantics of the modal logic $\lgc{K}$ --- has the \prp{CIP} and hence the super \prp{AP}, \prp{DIP}, and \prp{AP}, including one that establishes the \prp{CIP} via an analytic sequent calculus for $\lgc{K}$ and another that establishes the super \prp{AP} by constructing super-amalgams of spans of modal algebras. More generally, interpolation properties have been established for modal logics using a wide range of methods, both syntactic, using sometimes quite complex proof systems (see \refchapter{chapter:prooftheory}), and semantic (see, e.g.,~\cite{Mak91,CZ96,GM05}).   \lipicsEnd
\end{example}

\begin{theorem}[\cite{GJKO07}]
A variety of FL$_\text{e}$-algebras has the Craig interpolation property if, and only if, it has the superamalgamation property.
\end{theorem}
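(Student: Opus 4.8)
**Proof plan for: A variety of FL$_\text{e}$-algebras has the Craig interpolation property iff it has the superamalgamation property.**The plan is to prove both implications by arguments parallel to the proof of Theorem~\ref{t:apdip}, but keeping track of the lattice order so that interpolant \emph{equations} become interpolant \emph{inequalities} and the \prp{CIP} is invoked at the crucial step. Fix a variety $\V$ of FL$_\text{e}$-algebras. Recall from Example~\ref{ex:FLeEP} that $\V$ has the \prp{CEP} and the local deduction theorem $\Si\cup\set{\e\le\a}\mdl{\V}\e\le\b\iff\Si\mdl{\V}(\a\mt\e)^n\le\b$ for some $n\in\N$, and note that in every FL$_\text{e}$-algebra an equation $s\eq t$ is interderivable (as a premise) in $\mdl{\V}$ with $\e\le(s\ra t)\mt(t\ra s)$, and $\set{\e\le p,\e\le q}$ with $\set{\e\le p\mt q}$.

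For the direction from super \prp{AP} to \prp{CIP} --- which uses only that $\V$ is lattice-ordered --- suppose $\a\in\Fmc(\xbar,\ybar)$ and $\b\in\Fmc(\ybar,\zbar)$ satisfy $\mdl{\V}\a\le\b$. Apply the super \prp{AP} to the doubly injective span of inclusion maps $\F(\ybar)\hookrightarrow\F(\xbar,\ybar)$ and $\F(\ybar)\hookrightarrow\F(\ybar,\zbar)$ (free algebras lie in $\V$) to obtain $\m{D}\in\V$ and embeddings $\ps_1\colon\F(\xbar,\ybar)\ra\m{D}$, $\ps_2\colon\F(\ybar,\zbar)\ra\m{D}$ agreeing on $\F(\ybar)$. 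Since they agree on $\ybar$, they combine into a single homomorphism $\F(\xbar,\ybar,\zbar)\ra\m{D}$, and evaluating $\mdl{\V}\a\le\b$ along it gives $\ps_1(\a)\le\ps_2(\b)$ in $\m{D}$. The superamalgamation condition then supplies an $a\in\F(\ybar)$, with representative $\ga\in\Fmc(\ybar)$, such that $\ps_1(\a)\le\ps_1(\ga)=\ps_2(\ga)\le\ps_2(\b)$; as $\ps_1,\ps_2$ are embeddings, $\a\le\ga$ in $\F(\xbar,\ybar)$ and $\ga\le\b$ in $\F(\ybar,\zbar)$, i.e.\ $\mdl{\V}\a\le\ga$ and $\mdl{\V}\ga\le\b$.

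For the direction from \prp{CIP} to super \prp{AP}, follow the setup in the proof of Theorem~\ref{t:AP<=>RP}: present an arbitrary doubly injective span as $\m{A}=\F(\ybar)/\The_0$, $\m{B}_1=\F(\xbar,\ybar)/\The_1$, $\m{B}_2=\F(\ybar,\zbar)/\The_2$ via surjections $\pi_1,\pi_2$ extending identity maps, with $\The_0=\The_1\cap\Fc(\ybar)^2=\The_2\cap\Fc(\ybar)^2$, and set $\Phi\coloneqq\cg{\F(\xbar,\ybar,\zbar)}(\The_1\cup\The_2)$, $\m{D}\coloneqq\F(\xbar,\ybar,\zbar)/\Phi\in\V$, with induced maps $\ps_1,\ps_2$ agreeing on $\m{A}$ (via Theorem~\ref{t:general_homomorphism_theorem}). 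It remains to check (i) $\ps_1,\ps_2$ are embeddings, and (ii) the superamalgamation inequality; by Lemma~\ref{l:bridge} these translate, respectively, into $\The_1=\Phi\cap\Fc(\xbar,\ybar)^2$ (and the symmetric statement) and into: if $b_1=\pi_1(\a)$, $b_2=\pi_2(\b)$ with $\The_1\cup\The_2\mdl{\V}\a\le\b$ then there is $\ga\in\Fmc(\ybar)$ with $\The_1\mdl{\V}\a\le\ga$ and $\The_2\mdl{\V}\ga\le\b$. To establish such a claim: by finitarity replace finitely many relevant members of $\The_1$ and $\The_2$ by single premises $\e\le\delta_1$ (with $\delta_1$ over $\xbar\cup\ybar$) and $\e\le\delta_2$ (with $\delta_2$ over $\ybar\cup\zbar$), satisfying also $\The_1\mdl{\V}\e\le\delta_1$ and $\The_2\mdl{\V}\e\le\delta_2$; apply the local deduction theorem twice and rearrange with commutativity and residuation so that the only $\zbar$-containing subformula, $(\delta_2\mt\e)^n$, stands alone on one side, obtaining an inequality between a formula over $\xbar\cup\ybar$ and a formula over $\ybar\cup\zbar$ whose shared variables are exactly $\ybar$; apply the \prp{CIP} to extract $\ga\in\Fmc(\ybar)$. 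Using $(\delta_1\mt\e)^m\le\e$, $(\delta_2\mt\e)^n\le\e$ together with $\The_i\mdl{\V}\e\le\delta_i$, one recovers $\The_1\mdl{\V}\a\le\ga$ and $\The_2\mdl{\V}\ga\le\b$; pulling $\ga$ back through $\pi_1$ and $\pi_2$ (which agree on $\F(\ybar)$ and map it onto $\m{A}$) places it in $\m{A}$, which for (ii) gives the required $a\in A$ with $\ps_1(b_1)\le\ps_1\f_1(a)=\ps_2\f_2(a)\le\ps_2(b_2)$, and for (i) gives $\tuple{\a,\b}\in\The_1$.

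The main obstacle is exactly this rearrangement in the \prp{CIP}-to-super-\prp{AP} direction: after pushing $\The_1$ and $\The_2$ into the conclusion via the local deduction theorem, one must group the resulting product so that the inequality separates into a left side over $\xbar\cup\ybar$ and a right side over $\ybar\cup\zbar$ with overlap exactly $\ybar$ --- the naive rearrangement leaves $\zbar$ (from $\b$, or from the $\The_2$-part) on both sides and renders the \prp{CIP} inapplicable. Everything else is routine given Lemma~\ref{l:bridge}, Theorem~\ref{t:general_homomorphism_theorem}, and the local deduction theorem; the corresponding bridge for modal algebras is obtained by the same argument, using the modal local deduction theorem of Example~\ref{e:HeytingEP} in place of the FL$_\text{e}$ one.
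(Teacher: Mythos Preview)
Your proposal is correct and follows precisely the approach indicated in the paper, which states that the bridge between the \prp{CIP} and super \prp{AP} for varieties of FL$_\text{e}$-algebras is ``established similarly to Theorem~\ref{t:apdip}'' (the paper itself cites~\cite{GJKO07} rather than giving a full proof). Your super~\prp{AP}~$\Rightarrow$~\prp{CIP} direction via the span of free algebras is standard, and in the converse direction the key residuation step you isolate --- rewriting $(\delta_1\mt\e)^m(\delta_2\mt\e)^n\le\a\ra\b$ as $(\delta_1\mt\e)^m\cdot\a\le(\delta_2\mt\e)^n\ra\b$ so that the \prp{CIP} applies with overlap $\ybar$ --- is exactly the crux, and your treatment of it is right; the only point worth making explicit is that for item~(i) you may either argue directly (residuating $(\delta_2\mt\e)^n$ to the left and using $\The_2\cap\Fc(\ybar)^2=\The_0\subseteq\The_1$) or simply note that \prp{CIP}~$\Rightarrow$~\prp{DIP} (Proposition~\ref{p:CIPimpliesDIP}) together with the \prp{CEP} already yields the \prp{AP}, so the minimal congruence $\Phi=\cg{}(\The_1\cup\The_2)$ is automatically contained in an amalgamating congruence and hence restricts correctly.
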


\begin{example}
A proof of the \prp{CIP} for the variety of FL-algebras --- the algebraic semantics of the full Lambek calculus $\lgc{FL}$ --- is obtained using an analytic sequent calculus (see,~e.g.,~\cite{GJKO07,MPT23}). This variety does not have the \prp{AP} (see~\cite{JS25}, also for failures of the \prp{AP} for related varieties), although the question of whether it has the \prp{DIP} is still open. The varieties of $\ell$-groups and semilinear residuated lattices do not have the \prp{CIP} or the \prp{AP} (see Example~\ref{e:FL-algebrasAP}), but the status of the \prp{DIP} is open also for these cases.  \lipicsEnd
\end{example}

\noindent
Table~\ref{t:amalg} displays the status of the \prp{CEP}, \prp{CIP}, \prp{DIP}, and \prp{AP} for some of the varieties featured in this chapter (where `n/a' stands for `not applicable'  and `?' stands for `open problem').\footnote{Note that although Abelian groups lack a lattice reduct, they satisfy the Craig interpolation property with respect to an implication defined by $a\to b\coloneqq \iv{a}b$.}

\begin{table}
\begin{center}
\begin{tabular}[t]{lllll} 
\hline
Variety    						& 	\prp{CEP} &	\prp{CIP}	& \prp{DIP}&	 \prp{AP}\\
\hline         
Boolean algebras				&	yes		&	yes		&	yes		&	yes\\
modal algebras					&	yes		&	yes		&	yes		&	yes\\
Heyting algebras				&	yes		&	yes		&	yes		&	yes\\
G{\"o}del algebras 				&	yes		&	yes		&	yes		&	yes\\
MV-algebras					&	yes		&	no		&	yes		&	yes\\
BL-algebras					&	yes		&	no		&	yes		&	yes\\
FL-algebras 					&	no		&	yes		&	?		&	no\\
FL$_\text{e}$-algebras			&	yes		&	yes		&	yes		&	yes\\
semilinear FL-algebras 			&	no		&	no		&	?		&	no\\
semilinear FL$_\text{e}$-algebras	&	yes		&	no		&	no		&	no\\
$\ell$-groups					&	no		&	no		&	?		&	no\\
Abelian $\ell$-groups			&	yes		&	no		&	yes		&	yes\\
bounded lattices				&	no		&	yes		&	yes		&	yes\\
bounded distributive lattices		&	yes		&	yes		&	yes		&	yes\\
commutative monoids			&	no		&	n/a		&	yes		&	no\\
Abelian groups					&	yes		&	n/a	&	yes		&	yes\\
\hline
\end{tabular} 
\end{center}
\caption{Interpolation and amalgamation properties for a selection of varieties} 
\label{t:amalg}
\end{table}

\section*{Acknowledgments}
\addcontentsline{toc}{section}{Acknowledgments}

I would like to thank Wesley Fussner, Sam van Gool, Isabel Hortelano-Martin, and Simon Santschi for their careful reading of previous versions of this chapter and helpful feedback. I also acknowledge support from the Swiss National Science Foundation (SNSF) grant no. 200021\textunderscore215157.

 
\bibliography{bibliography}
 
\end{document}